\pgfplotsset{compat=1.15}
\newcommand{\Hidden}[1]{}
\def\big{\bigskip}
\newtheorem{theorem}{Theorem}
\newtheorem{thm}[theorem]{Theorem}
\newtheorem{lemma}[theorem]{Lemma}
\newtheorem{prop}[theorem]{Proposition}
\newtheorem{corollary}[theorem]{Corollary}
\newtheorem{conj}[theorem]{Conjecture}
\newtheorem*{ktzordering}{Theorem \ref{conj:Katzordering}}
\newtheorem*{cnjroots}{Conjecture \ref{conj:Katzroots}}
\newtheorem*{ktzcycle}{Theorem \ref{thm:cycleordering}}
\newtheorem*{cycleconvth}{Theorem \ref{cycleconv}}
\newtheorem*{pathconvth}{Theorem \ref{pathcoev}}
\newtheorem*{proprootfive}{Proposition \ref{prop:Katroot5}}
\newtheorem*{lemmacycle1}{Lemma \ref{lem:cycle-det}}
\theoremstyle{definition}
\newtheorem{defn}[theorem]{Definition}
\theoremstyle{remark}
\newcommand{\aef}[1]{{\color{blue} \sf AEF: [#1]}}
\begin{document}
\begin{frontmatter}
\title{Katz similarity index comparisons}

\author[1]{Emily J. Evans\fnref{fn1}}
\ead{ejevans@mathematics.byu.edu}
\author[2]{Amanda E. Francis\fnref{fn1}}
\ead{aefr@umich.edu}
\author[3]{Rebecca D. Jones}
\ead{rebecca.d.jones@pnnl.gov}
\address[1]{Department of Mathematics, Brigham Young University, Provo, UT 84602, USA}
\address[2]{Mathematical Reviews, American Mathematical Society, Ann Arbor, MI 48103, USA}
\address[3]{Pacific Northwest National Labs, Richland, WA 99352, USA}

\fntext[fn1]{This material is based upon work supported by the National Security Agency
under Grant No. H98230-19-1-0119, while the authors were in residence at the Simons Laufer Mathematical Sciences Institute in Berkeley, California, during the summer of 2019.} 

\begin{abstract}{
In this paper we compare the Katz similarity index to two other node similarity metrics, the standard distance and the resistance distance, for both path and cycle graphs.  We consider how  Katz similarity  varies as the parameter $\alpha$ and the size of the graph vary. We also characterize when the three metrics give rise to different orderings of vertex pairs.  In particular, we find that for all admissible values of $\alpha$ the Katz similarity index, the resistance distance, and the distance give the same ordering for node pairs in a cycle graph of arbitrary length, but the same is not true for the path graph.}
\end{abstract}

\begin{keyword}
Katz similarity index, resistance distance, distance 
\\

MSC 2010:

\end{keyword}

\end{frontmatter}

\section{Introduction}

One problem of particular interest in network science is the identification of missing links and the prediction of new links.
There are many ways one could do this, but it typically involves calculating a similarity measure between pairs of nodes and predicting the $k$ (non-edge) pairs with the maximum likelihood. Methods of identifying new edges can be split into local similarity methods which includes common neighbors, Jaccard’s coefficient, preferential attachment, resource allocation and Adamic-Adar; distance (path) methods including Katz similarity index, resistance distance and rooted PageRank; and kernel-based similarity measures such as commute time kernel and exponential diffusion kernel; and graph neural networks~\cite{fouss_algorithms_2016-1,pachev_fast_2018,wu_graph_2022}.

Although there have been a large number of studies touting the advantages of one method over another for particular types of problems, the literature lacks an in-depth analysis of when these algorithms return the same predicted links and when they do not.  This paper fills this gap by comparing methods of determining the similarity between node pairs: the Katz similarity index, standard distance, and resistance distance. 

Recall that the resistance distance, also called effective resistance, is calculated by considering the graph $G$ as an electrical circuit with each edge being replaced by a resistor.  The resistance distance is then defined as the resistance between two nodes on that circuit.  Although there are various ways of calculating resistance distance (see for example~\cite{evans2021algorithmic}) a common way uses the Moore-Penrose pseudoinverse of the Laplacian matrix of $G$
\[R_G(i,j) = (L_G^{\dagger})_{ii}+(L_G^{\dagger})_{jj}-2(L_G^{\dagger})_{ij}.\]

In contrast,  the Katz similarity index gives a weighted sum of all paths of various lengths between the two nodes.  More formally,
\begin{defn}
Let $G$ be a simple graph with adjacency matrix $A$, and 
let $0 < \alpha <\frac{ 1}{\rho(A)}$. Then the  {\it Katz matrix of $G$ at $\alpha$} is given by 
\[
K_G(\alpha) = \alpha A + \alpha^2 A^2 + \alpha^3 A^3 +\cdots = \sum_{t = 1}^\infty \alpha^t A^t =  (I - \alpha A)^{-1} - I 
\]
and the {\it Katz similarity index} at $\alpha$ between vertices $i$ and $j$ of $G$ is given by $K_G(\alpha)_{i,j}.$

\end{defn}


In the link prediction problem,  vertex pairs with the maximal Katz similarity index or the minimal effective resistance are considered. Katz similarity index is affected by the decay parameter $\alpha$; thus,  changing $\alpha$ may yield a different pair ordering.  The purpose of this work is to explore the effect of changing $\alpha$ on the ordering of vertex pairs for two families of simple graphs: paths and cycles. In Figure~\ref{fig:scatter_path_cycle} we demonstrate the effect of changing $\alpha$ on the Katz similarity index in the case of paths and cycles.  In the upper right panel we display the case when $\alpha =0.46$ for the path graph.  Observe that vertices that are $k$ apart have lower Katz scores than vertices that are $k+1$ apart resulting in a different ordering.  Whereas in the lower panels, corresponding to the cycle graph, the same ordering is observed regardless of the value of $\alpha$.  In particular we have the following results for cycles and paths:

\begin{figure}[h]
\begin{picture}(400,240)
\put(-10,10){\includegraphics[width=1\textwidth]{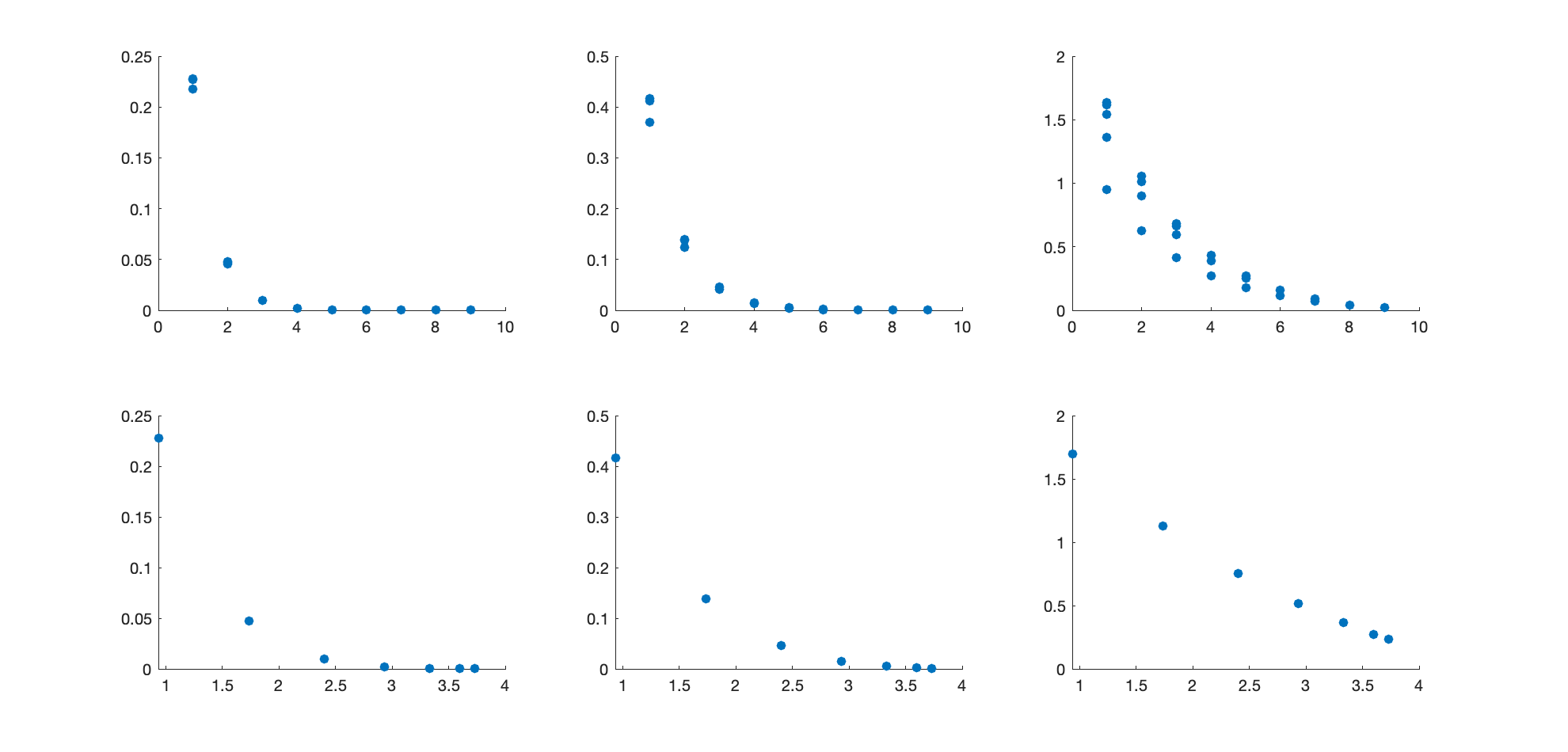}}
\put(0,160){{\rotatebox[origin=c]{90}{$P_{10}$}}}
\put(0,60){{\rotatebox[origin=c]{90}{$C_{15}$}}}
\put(70,0){{$\alpha = 0.2$}}
\put(190,0){{$\alpha = 0.3$}}
\put(310,0){{$\alpha = 0.46$}}
\end{picture}
\caption{Katz similarity scores versus effective resistance for pairs of vertices in the path graph $P_{10}$ on 10 vertices and the cycle graph $C_{15}$ on 15 vertices with varying values of $\alpha$.  Observe that when $\alpha =0.46$ some vertex pairs in $P_{10}$ that are $k$ apart have lower Katz scores than some pairs that are $k+1$ apart, resulting in a different ordering.}\label{fig:scatter_path_cycle}
\end{figure}
\begin{theorem}\label{thm:cycleordering}Let $C_n$ be a cycle graph on $n$ vertices, with associated adjacency matrix $A_n$. For every valid value of $\alpha$ (i.e., $0<\alpha < 1/\rho(A_n)$)  the effective resistance, distance, and Katz similarity rank all node pairs of $G$ in the same order.  That is, 
\[
K_{C_n}(\alpha)_{i,j} < K_{C_n}(\alpha)_{i',j'}
\Leftrightarrow
R_{C_n}(i,j)>R_{C_n}(i',j') 
\Leftrightarrow
d_{C_n}(i,j)>d_{C_n}(i',j')
\]
for all $i,i',j,j' \in [n]$.
\end{theorem}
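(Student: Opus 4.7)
The plan is to reduce the three-way equivalence to a single monotonicity statement. The dihedral automorphism group of $C_n$ acts transitively on ordered pairs of vertices at any given distance, so each of $d_{C_n}(i,j)$, $R_{C_n}(i,j)$, and $K_{C_n}(\alpha)_{i,j}$ depends on $(i,j)$ only through $k := d_{C_n}(i,j) \in \{1, \ldots, \lfloor n/2 \rfloor\}$. It therefore suffices to show that each of them is a strictly monotone function of $k$ on this range. The distance case is by definition. For the effective resistance, the two arcs from $i$ to $j$ (of lengths $k$ and $n-k$) form parallel resistors in the electrical model, giving $R_{C_n}(i,j) = k(n-k)/n$, which is strictly increasing on $\{1,\ldots,\lfloor n/2\rfloor\}$.

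The real content is the Katz part, for which I would derive the closed form
\[
K_{C_n}(\alpha)_{0,k} \;=\; \frac{r^k + r^{n-k}}{\sqrt{1 - 4\alpha^2}\,(1 - r^n)},
\qquad
r := \frac{1 - \sqrt{1 - 4\alpha^2}}{2\alpha} \in (0,1),
\]
where $r$ and $r^{-1}$ are the real positive roots of $\alpha z^2 - z + \alpha = 0$; the admissibility bound $\alpha < 1/\rho(A_n) = 1/2$ is exactly what places $r$ in $(0,1)$ and keeps $1-r^n$ nonzero. Given this, the one-step comparison reduces to the sign of $(r^k + r^{n-k}) - (r^{k+1} + r^{n-k-1}) = (1-r)(r^k - r^{n-k-1})$, which is strictly positive on $k \in \{1,\ldots,\lfloor n/2\rfloor -1\}$ because a quick parity case-check shows $k < n-k-1$ throughout this range. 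Hence $K_{C_n}(\alpha)_{0,k}$ is strictly decreasing in $k$, and combining this with the strict increase of $d_{C_n}$ and $R_{C_n}$ delivers the claimed equivalence (the Katz ordering being reversed relative to the other two, exactly as in the statement).

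The sole substantive obstacle is the derivation of the Katz closed form, for which I see two natural routes. Route (A): diagonalize the circulant $A_n$ in the Fourier basis (eigenvalues $2\cos(2\pi j/n)$), write $(I - \alpha A_n)^{-1}_{0,k}$ as a cosine sum over $j$, pair complex-conjugate frequencies, and simplify. Route (B): first prove the infinite-line Katz entry $(I - \alpha A_\Z)^{-1}_{0,s} = r^{|s|}/\sqrt{1 - 4\alpha^2}$ (by contour-integrating $\tfrac{1}{2\pi}\int_0^{2\pi}\tfrac{e^{is\theta}}{1 - 2\alpha\cos\theta}\,d\theta$, or equivalently by solving the recursion $x_{s+1} - \alpha^{-1}x_s + x_{s-1} = 0$ with decay at $\pm\infty$), then apply the walk-lifting identity $K_{C_n}(\alpha)_{0,k} = \sum_{m\in\Z} K_\Z(\alpha)_{0,\,k+mn}$, which holds because walks on $C_n$ from $0$ to $k$ lift bijectively to walks on $\Z$ from $0$ to $k + mn$ over all $m$. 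Route (B) collapses to two geometric series in $r^n$ and delivers $\tfrac{r^k + r^{n-k}}{1 - r^n}$ almost at once; I would present this route for conceptual transparency, with Route (A) as an alternative for readers preferring a self-contained spectral argument.
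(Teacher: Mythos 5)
Your proposal is correct, and it takes a genuinely different route from the paper. The paper obtains the closed form $K_{C_n}(\alpha)_{i,j} = \bigl(\alpha^{k}d_{n-k-1}(\alpha)+\alpha^{n-k}d_{k-1}(\alpha)\bigr)/\bigl(d_{n-1}(\alpha)-2\alpha^{n}-2\alpha^{2}d_{n-2}(\alpha)\bigr)$ by Cramer's rule, block-determinant manipulations, and a separate determinant lemma for $I-\alpha A_{C_n}$, and then proves monotonicity in $k$ by a backward induction on $k$ comparing the numerators $\Delta_{n,k}$, with a base case at $k=\lfloor n/2\rfloor-1$. You instead get the equivalent but cleaner form $\bigl(r^{k}+r^{n-k}\bigr)/\bigl(\sqrt{1-4\alpha^{2}}\,(1-r^{n})\bigr)$ by lifting walks to the covering graph $\Z\to\Z/n\Z$ and summing two geometric series (your two forms agree via $d_i(\alpha)=\alpha^{i}\,\frac{r^{-(i+1)}-r^{i+1}}{r^{-1}-r}$ and $\sqrt{1-4\alpha^2}=\alpha(r^{-1}-r)$), after which monotonicity is the one-line factorization $(1-r)(r^{k}-r^{n-k-1})>0$ — no induction needed. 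This buys considerable transparency and makes the role of the admissibility bound $\alpha<1/\rho(A_n)=1/2$ (equivalently $r\in(0,1)$, $1-r^n>0$) completely explicit; the paper's determinant route, on the other hand, produces the $d_i$-based formula that it reuses heavily in its convergence arguments in Section~\ref{sec:Convergence}, so the two derivations serve different downstream purposes. Two minor points to tidy in a final write-up: justify the interchange of the sums over walk length $t$ and winding number $m$ (immediate by Tonelli, since all terms are nonnegative and the double sum converges for $\alpha<1/2$), and either handle or explicitly exclude the degenerate pairs $i=j$, which the theorem's phrasing nominally includes (the paper's proof silently assumes $i<j$ as well).
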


\begin{theorem}\label{conj:Katzordering} Let $P_n$ be a path graph on $n$ of vertices. The resistance distance and usual graph distance are equal, thus they always rank all node pairs equivalently.   If $\alpha < 1/{\sqrt{5}}$, then effective resistance (equivalently, distance) and Katz similarity rank all node pairs of $G$ in the same order. That is, 
\[
K_{P_n}(\alpha)_{i,j} < K_{P_n}(\alpha)_{i',j'}
\Leftrightarrow
R_{P_n}(i,j)>R_{P_n}(i',j')
\]
for $\alpha< 1/\sqrt{5}$ and for all $i,i',j,j' \in [n]$.
\end{theorem}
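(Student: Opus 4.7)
The first assertion is standard: on any tree, effective resistance equals the number of edges on the unique path between two vertices, so $R_{P_n}(i,j)=d_{P_n}(i,j)$. The substance is the comparison with Katz similarity, and the plan is to derive an explicit formula for the Katz entry and then reduce the ordering question to a single scalar inequality on the sequence $D_k := \det(I-\alpha A_{P_k})$. Since $I-\alpha A_{P_n}$ is tridiagonal with $1$s on the diagonal and $-\alpha$ off-diagonal, the classical cofactor formula for the inverse of a tridiagonal matrix gives, for $i\le j$,
\[
K_{P_n}(\alpha)_{i,j} \;=\; \alpha^{\,j-i}\,\frac{D_{i-1}\,D_{n-j}}{D_n},
\]
where $D_0=D_1=1$ and $D_k = D_{k-1}-\alpha^2 D_{k-2}$. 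Solving this recurrence yields $D_k = (r_1^{k+1}-r_2^{k+1})/(r_1-r_2)$ with $r_{1,2} = (1\pm\sqrt{1-4\alpha^2})/2$, both positive real since $\alpha<1/\sqrt 5<1/2$.

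The ordering statement then becomes: for every pair $(i,j)$ at graph distance $d$ and every pair $(i',j')$ at graph distance $d+1$, we require $D_{i-1}D_{n-j}>\alpha\,D_{i'-1}D_{n-j'}$. I would bound the left side from below by its minimum over pairs at distance $d$, and the right side from above by its maximum over pairs at distance $d+1$. Two facts about $\{D_k\}$ drive the analysis: positivity (the sequence records principal minors of a positive-definite matrix), and a Cassini-type identity $D_k^2-D_{k-1}D_{k+1}=\alpha^{2k}$ establishing strict log-concavity. Log-concavity together with monotonicity $D_k > D_{k+1}$ for $k\ge 1$ forces, for fixed sum $a+b = S$, the product $D_aD_b$ to be minimized at the extremes $(0,S)$ and maximized at the midpoint. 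Consequently, the ordering claim reduces to showing, for $N:=n-d\ge 2$ and $m+m'=N-2$ with $|m-m'|\le 1$,
\[
D_{N-1} \;>\; \alpha\, D_m D_{m'}.
\]

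To finish, I plug in the closed form. In the harder even-$N$ case, the identities $(r_1^{N/2}-r_2^{N/2})^2 = r_1^N+r_2^N-2\alpha^N$ and $r_1^N-r_2^N=(r_1^{N/2}-r_2^{N/2})(r_1^{N/2}+r_2^{N/2})$ let me cancel the positive factor $r_1^{N/2}-r_2^{N/2}$ from both sides, leaving
\[
r_1^{N/2}\bigl[(r_1-r_2)-\alpha\bigr]+r_2^{N/2}\bigl[(r_1-r_2)+\alpha\bigr] \;>\; 0.
\]
The second bracket is always positive, while the first equals $\sqrt{1-4\alpha^2}-\alpha$, which is nonnegative precisely when $\alpha\le 1/\sqrt 5$. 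That is exactly where the threshold $1/\sqrt 5$ emerges. The odd-$N$ case is handled by the companion factorization $(r_1^{(N-1)/2}-r_2^{(N-1)/2})(r_1^{(N+1)/2}-r_2^{(N+1)/2}) = r_1^N+r_2^N-\alpha^{N-1}$ and reduces to an analogous bracket inequality controlled by the same threshold.

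I expect the main obstacle to be pinning down the worst-case pairs in the extremization step. The combinatorial minimization of $D_a D_b$ over $a+b=S$ needs the Cassini identity together with an argument (essentially Karamata's inequality for the concave function $\log D$) that majorization of index vectors reverses the product. Once this reduction is in place, the algebra in the final step is short and the emergence of $1/\sqrt 5$ as the sharp threshold is transparent.
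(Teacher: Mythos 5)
Your proposal is correct, and while it follows the same overall reduction as the paper, it finishes the argument by a genuinely different route. The setup coincides: your $D_k$ is the paper's $d_k(\alpha)$ (Theorem~\ref{thm:path-det}), your Katz formula is Theorem~\ref{thm:Katzdist}, your Cassini identity $D_k^2-D_{k-1}D_{k+1}=\alpha^{2k}$ is Theorem~\ref{thm:alldfacts} Part~\ref{dfactprod} with $n=k$, and your extremization of $D_aD_b$ over fixed $a+b$ (log-concavity plus majorization) reproduces exactly what the paper gets from Proposition~\ref{prop:monotone}, reducing everything to $D_{N-1}>\alpha D_mD_{m'}$ with $m+m'=N-2$, which is the paper's ${}_n\tilde p_j>0$ from Proposition~\ref{prop:reduceThm2}. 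The divergence is in how that last inequality is proved: the paper runs an induction on the ratio bound $d_n\geq \mathbb{FR}_n\, d_{n-1}$ (Theorem~\ref{thm:alldfacts} Part~\ref{dfactBF}) followed by a fairly long golden-ratio computation split by the parity of $n-j$, whereas you substitute the Binet form $D_k=(r_1^{k+1}-r_2^{k+1})/(r_1-r_2)$ with $r_{1,2}=(1\pm\sqrt{1-4\alpha^2})/2$, $r_1r_2=\alpha^2$, and cancel the common factor to land on $r_1^{N/2}\bigl[\sqrt{1-4\alpha^2}-\alpha\bigr]+r_2^{N/2}\bigl[\sqrt{1-4\alpha^2}+\alpha\bigr]>0$. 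This makes the threshold completely transparent ($\sqrt{1-4\alpha^2}\geq\alpha\iff\alpha\leq 1/\sqrt5$) and even explains why the cut-off values should approach $1/\sqrt5$ for large $N$, since $r_1^{N/2}$ dominates. Two small points to tighten: the odd-$N$ case is not purely "the same bracket inequality" --- after using $(r_1^{(N-1)/2}-r_2^{(N-1)/2})(r_1^{(N+1)/2}-r_2^{(N+1)/2})=r_1^N+r_2^N-\alpha^{N-1}$ you are left with $r_1^N(\sqrt{1-4\alpha^2}-\alpha)-r_2^N(\sqrt{1-4\alpha^2}+\alpha)+\alpha^N>0$, whose last two terms require the extra (easy, since $\alpha^2=r_1r_2$ and $r_1/r_2\geq\Phi^2$ on $(0,1/\sqrt5]$) observation that $(r_1/r_2)^{N/2}>\sqrt{1-4\alpha^2}+\alpha$; and, as with the paper's own statement, the biconditional should really be read as the one-sided implication that larger distance forces strictly smaller Katz index, since two pairs at equal distance generally have unequal Katz values.
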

In fact, for each $n$ there is a {\it cut-off} value $\alpha_n$ for which the Katz similarity index and resistance distance rankings agree 
for $\alpha< \alpha_n$ and disagree
for $\alpha>\alpha_n$ (see Theorem~\ref{thm:rootsexist}).  
Computational evidence for the following conjecture will also be presented: 

\begin{conj}\label{conj:Katzroots} 
The cut-off values $\alpha_n$ converge to $\frac{1}{\sqrt{5}}$ from above. 
\end{conj}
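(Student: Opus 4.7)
The plan is to compute the exact limit of the cutoffs $\alpha_n$ by replacing $P_n$ with two simpler graphs in the asymptotic regime: the bi-infinite path $\Z$ (which models the interior) and the half-infinite path $H=\{1,2,3,\ldots\}$ (which models a neighborhood of an endpoint). On $\Z$, the Katz resolvent has the closed form $[(I-\alpha A_\Z)^{-1}]_{i,j}=\lambda(\alpha)^{|i-j|}/\sqrt{1-4\alpha^2}$, where $\lambda(\alpha)=(1-\sqrt{1-4\alpha^2})/(2\alpha)$. The method of images then yields, for the half-line,
\[
K_H(\alpha)_{i,j} \;=\; \frac{\lambda^{|i-j|}-\lambda^{\,i+j}}{\sqrt{1-4\alpha^2}}, \qquad i,j\ge 1.
\]

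First I would identify the candidate limit by working on $H$. Comparing the endpoint-adjacent distance-$1$ pair $K_H(\alpha)_{1,2}=(\lambda-\lambda^3)/\sqrt{1-4\alpha^2}$ against a deep-interior distance-$2$ pair $\lim_{i\to\infty}K_H(\alpha)_{i,i+2}=\lambda^2/\sqrt{1-4\alpha^2}$ and setting the two equal reduces to $\lambda^2+\lambda-1=0$, i.e.\ $\lambda=(\sqrt{5}-1)/2$. Substituting back through the identity $\alpha=\lambda/(1+\lambda^2)$ (obtained by squaring $\sqrt{1-4\alpha^2}=1-2\alpha\lambda$) yields $\alpha=1/\sqrt{5}$, explaining the golden-ratio-like appearance of this value.

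Next I would lift the calculation to finite $n$. Using the spectral decomposition of $A_{P_n}$ (equivalently, a Chebyshev-polynomial formula for the entries of $(I-\alpha A_{P_n})^{-1}$), one shows that with $p_n:=\lfloor n/2\rfloor$, the difference $f_n(\alpha):=K_{P_n}(\alpha)_{1,2}-K_{P_n}(\alpha)_{p_n,p_n+2}$ converges to $f_\infty(\alpha):=(\lambda-\lambda^3-\lambda^2)/\sqrt{1-4\alpha^2}$ uniformly on compact subsets of $(0,1/2)$. Because $f_\infty$ vanishes transversally at $1/\sqrt{5}$, the roots $\widetilde{\alpha}_n$ of $f_n$ converge to $1/\sqrt{5}$ from above. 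Since these roots are themselves candidates for the crossing that defines $\alpha_n$, we have $\alpha_n\le\widetilde{\alpha}_n$, and combining with Theorem~\ref{conj:Katzordering} (which gives $\alpha_n\ge 1/\sqrt{5}$, strict by continuity since the finite-$n$ comparison cannot yet be tight at $\alpha=1/\sqrt{5}$) squeezes $\alpha_n\to 1/\sqrt{5}^+$.

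The main obstacle I anticipate lies in verifying that the specific pair of pairs $\bigl((1,2),(p_n,p_n+2)\bigr)$ is really what governs $\alpha_n$, rather than some other pair of pairs producing an even smaller crossing value. This requires ruling out competing comparisons---pairs at intermediate positions, pairs of distances larger than $1$ versus $2$, or asymmetric pairs straddling both endpoints---as candidates for the \emph{first} crossing as $\alpha$ increases past $1/\sqrt{5}$. A careful monotonicity argument together with the Chebyshev asymptotics, which show that the finite-path corrections to the interior Green's function are exponentially small in the distance to the boundary, should suffice in the limit $n\to\infty$; but making this argument uniform so as to identify $\alpha_n$ exactly (rather than only up to the limit) is the delicate step and the likely reason the authors have left the statement as a conjecture.
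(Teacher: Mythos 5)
First, a point of calibration: the paper does \emph{not} prove this statement. It is stated as Conjecture~\ref{conj:Katzroots} and left open; the authors supply only supporting evidence, namely Theorem~\ref{thm:rootsexist} (for $n-j\geq 5$ the polynomial ${}_np_j$ has a root in $(1/\sqrt{5},1/2)$), Corollary~\ref{cor:numroot5} (${}_n\tilde p_j(1/\sqrt{5})>0$ for all $n$, tending to $0$), and numerical plots. So there is no proof of record to compare yours against; what you have written is an outline of genuinely new mathematics, and it should be judged as such. Your identification of the limit is correct and consistent with the paper's closed forms: with $\lambda=2\alpha/(1+\sqrt{1-4\alpha^2})$ one checks that the paper's $\lim_{n\to\infty}K_{P_n}(\alpha)_{1,2}=\alpha\bigl(\tfrac{2}{1+\sqrt{1-4\alpha^2}}\bigr)^2=\lambda^2/\alpha$ coincides with your half-line value $(\lambda-\lambda^3)/\sqrt{1-4\alpha^2}$, and the interior distance-$2$ value $\lambda^2/\sqrt{1-4\alpha^2}$ follows from the Binet form $d_n=(r_+^{n+1}-r_-^{n+1})/(r_+-r_-)$ with $r_\pm=(1\pm\sqrt{1-4\alpha^2})/2$ and $\alpha/r_+=\lambda$. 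The equation $\lambda^2+\lambda-1=0$ giving $\alpha=1/\sqrt{5}$ is exactly the right explanation of where this constant comes from.

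That said, as written this is a plan with a real gap, and the gap is not quite where you locate it. The step that actually needs proof is the locally uniform convergence $f_n\to f_\infty$ for the \emph{moving} pair $(p_n,p_n+2)$ with $p_n=\lfloor n/2\rfloor$: the paper's Theorem~\ref{pathcoev} and Theorem~\ref{thm:alldfacts} Part~\ref{dfactratio} are pointwise statements for fixed indices, so you must show $d_{p_n-1}(\alpha)d_{n-p_n-2}(\alpha)/d_n(\alpha)\to r_+^{-2}/\sqrt{1-4\alpha^2}$ uniformly on compact subsets of $(0,1/2)$ (straightforward from the Binet form, but it must be written down, with attention to $\alpha\to 1/2$ where $r_+=r_-$). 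Given that, the argument closes more easily than you fear: since $f_\infty<0$ strictly on $(1/\sqrt{5},1/2)$, for any $\beta>1/\sqrt{5}$ one gets $f_n(\beta)<0$ eventually, while Theorem~\ref{conj:Katzordering} plus Corollary~\ref{cor:numroot5} (via Proposition~\ref{prop:reduceThm2}) give $f_n>0$ on $(0,1/\sqrt{5}]$; hence ${}_np_1$ changes sign in $(1/\sqrt{5},\beta)$, which forces a ranking reversal there and yields $1/\sqrt{5}<\alpha_n<\beta$. In particular the ``main obstacle'' you flag---ruling out competing pairs of pairs---is not needed for the limit statement: the extremal positions within each distance class are already pinned down by Proposition~\ref{prop:monotone}, and for the squeeze you only need the $j=1$ comparison as an \emph{upper} bound on $\alpha_n$ together with Theorem~\ref{conj:Katzordering} as the lower bound; you never have to identify which comparison is tight. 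Transversality of $f_\infty$ at $1/\sqrt{5}$ is likewise unnecessary. So: right limit, right mechanism, but the proposal is not yet a proof until the uniform asymptotics for the midpoint pair are supplied.
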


Another question of interest related to Katz similarity, is how Katz similarity index is affected by the size of a graph, i.e., to what extent is this a measure of local versus global behavior. We show the following two theorems. 

\begin{theorem}\label{pathcoev}
    $K_{P_n}(\alpha)_{i,j}$ converges as $n$ goes to $\infty$. 
\end{theorem}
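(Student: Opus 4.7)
The starting point is to pin down which $\alpha$ even make sense across all large $n$. Since $\rho(A_n)=2\cos(\pi/(n+1))$ is strictly increasing to $2$, the admissibility constraint $0<\alpha<1/\rho(A_n)$ tightens with $n$, and for a fixed $\alpha$ to satisfy it for every sufficiently large $n$ one must have $\alpha<1/2$. I would therefore fix vertices $i,j\in\mathbb{N}$ and an $\alpha\in(0,1/2)$, and prove convergence of the sequence indexed by $n\ge \max(i,j)$.

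The heart of the proof is a combinatorial stabilization argument. The entry $(A_n^t)_{i,j}$ counts walks of length $t$ from $i$ to $j$ on $P_n$. Any such walk visits only vertices in $\{1,\dots,\max(i,j)+t\}$, since each step changes the index by $\pm 1$. Hence, as soon as $n\ge \max(i,j)+t$, the walk-counts in $P_n$ coincide with those in the one-sided infinite path $P_\infty$ on $\mathbb{N}$; write this common value as $w_t(i,j)$. In particular, for each fixed $t$, the sequence $\bigl((A_n^t)_{i,j}\bigr)_n$ is eventually constant, equal to $w_t(i,j)$.

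To pass this termwise limit through the series $K_{P_n}(\alpha)_{i,j}=\sum_{t=1}^\infty \alpha^t (A_n^t)_{i,j}$, I would apply the uniform bound $(A_n^t)_{i,j}\le 2^t$ (at most two choices at each step), giving the summable dominator $|\alpha^t (A_n^t)_{i,j}|\le (2\alpha)^t$ since $\alpha<1/2$. The Weierstrass $M$-test then yields uniform (in $n$) convergence of the series and justifies interchanging limit and sum, producing
\[
\lim_{n\to\infty} K_{P_n}(\alpha)_{i,j}=\sum_{t=1}^\infty \alpha^t w_t(i,j),
\]
a finite quantity.

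No step is genuinely hard; the only subtleties are isolating the correct range $\alpha<1/2$ and choosing the clean exponential dominator. As a remark, the limit admits a closed form: solving the recurrence $x_{k+1}-(1/\alpha)x_k+x_{k-1}=0$ with boundary conditions $x_0=0$ and $x_k\to 0$, together with the unit jump at $k=j$, gives an explicit expression in the roots $r_\pm=(1\pm\sqrt{1-4\alpha^2})/(2\alpha)$. An alternative route uses the known spectral decomposition of $A_n$ with eigenvalues $2\cos(k\pi/(n+1))$ and eigenvectors $\sqrt{2/(n+1)}\sin(k\pi\cdot/(n+1))$, recognizing $K_{P_n}(\alpha)_{i,j}$ as a Riemann sum whose integrand $\frac{2\alpha\cos\theta}{1-2\alpha\cos\theta}\sin(i\theta)\sin(j\theta)$ is continuous on $[0,\pi]$ precisely when $\alpha<1/2$; this also produces convergence (and identifies the limit as the corresponding integral), but the combinatorial approach above is both shorter and more transparent for the statement at hand.
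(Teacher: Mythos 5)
Your argument is correct, but it is a genuinely different route from the paper's. The paper proves convergence by first deriving the closed form $K_{P_n}(\alpha)_{i,j} = \alpha^{j-i}d_{i-1}(\alpha)d_{n-j}(\alpha)/d_n(\alpha)$ (Theorem~\ref{thm:Katzdist}, via Cramer's rule on $I-\alpha A_{P_n}$) and then invoking the ratio limit $d_{n+k}(\alpha)/d_n(\alpha) \to \bigl(\tfrac{1}{2}(1+\sqrt{1-4\alpha^2})\bigr)^k$ (Theorem~\ref{thm:alldfacts}, Part~\ref{dfactratio}); this immediately yields the explicit limit $\alpha^{j-i}d_{i-1}(\alpha)\bigl(\tfrac{2}{1+\sqrt{1-4\alpha^2}}\bigr)^{j}$, which the stated theorem in the body actually includes. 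You instead work directly with the defining series $\sum_t \alpha^t (A_n^t)_{i,j}$: walks of length $t$ from $i$ to $j$ never reach vertex $n$ once $n\ge \max(i,j)+t$, so each coefficient is eventually constant in $n$, and the uniform dominator $(2\alpha)^t$ (valid since the maximum degree is $2$ and $\alpha<1/2$) lets you pass the limit through the sum. Both are sound; your version is more elementary and self-contained (it needs none of the $d_n$ machinery and would generalize to any graph sequence that stabilizes locally around $i$ and $j$), while the paper's buys the closed-form limit for free, which is what the surrounding results actually use. Two small points: your assertion that admissibility for all large $n$ forces $\alpha<1/2$ is marginally too strong, since $\rho(A_{P_n})=2\cos(\pi/(n+1))<2$ makes $\alpha=1/2$ admissible for every finite $n$ (the restriction to $\alpha<1/2$ is still the right setting, and is the one the paper implicitly adopts); and your closing closed-form remark is a sketch rather than a proof, though nothing in the convergence claim depends on it.
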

\begin{theorem}\label{cycleconv}
$K_{C_n}(\alpha)_{i,j}$ converges as $n$ goes to $\infty$. 
\end{theorem}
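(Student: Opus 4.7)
I will exploit the circulant structure of $A_n$: its eigenvalues are $\lambda_k=2\cos(2\pi k/n)$ with orthonormal eigenvectors $v_k$ whose entries are $(v_k)_\ell=n^{-1/2}\exp(2\pi\mathbf{i}\ell k/n)$ (here $\mathbf{i}=\sqrt{-1}$), for $k=0,\ldots,n-1$. This yields the exact spectral formula
\[(A_n^t)_{i,j}=\frac{1}{n}\sum_{k=0}^{n-1}\bigl(2\cos(2\pi k/n)\bigr)^t\exp(2\pi\mathbf{i}(i-j)k/n).\]
Since $\rho(A_n)=2$ for every $n\geq 3$, the hypothesis $\alpha<1/\rho(A_n)$ gives $2\alpha<1$ \emph{uniformly in $n$}. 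In particular $|2\alpha\cos(2\pi k/n)|\leq 2\alpha<1$, so I may interchange the (finite) sum over $k$ with the (absolutely convergent) geometric series in $t$ and sum it, obtaining the closed form
\[K_{C_n}(\alpha)_{i,j}=\frac{1}{n}\sum_{k=0}^{n-1}\frac{2\alpha\cos(2\pi k/n)}{1-2\alpha\cos(2\pi k/n)}\exp(2\pi\mathbf{i}(i-j)k/n).\]

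Next, I would fix the integer $d=i-j$ (which does not change as $n$ varies, once the vertex labels $i,j$ are chosen), and set $g_\alpha(\theta)=\tfrac{2\alpha\cos\theta}{1-2\alpha\cos\theta}\exp(\mathbf{i}d\theta)$. Then the right-hand side above is exactly the equispaced Riemann sum $\tfrac{1}{n}\sum_{k=0}^{n-1}g_\alpha(2\pi k/n)$. Because the denominator $1-2\alpha\cos\theta$ is bounded below by $1-2\alpha>0$, the function $g_\alpha$ is continuous on $[0,2\pi]$, so the standard convergence theorem for Riemann sums of a continuous function yields
\[\lim_{n\to\infty}K_{C_n}(\alpha)_{i,j}=\frac{1}{2\pi}\int_0^{2\pi}\frac{2\alpha\cos\theta}{1-2\alpha\cos\theta}\exp(\mathbf{i}d\theta)\,d\theta,\]
a finite number depending only on $\alpha$ and $d=i-j$. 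This proves the theorem.

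The main obstacle is purely technical: the only step needing care is the interchange of summations, dominated by $\sum_t(2\alpha)^t<\infty$ uniformly in $k$, so this is routine. As a sanity check one could evaluate the limiting integral by residues at $z=e^{\mathbf{i}\theta}$ to obtain a closed form in terms of $r=(1-\sqrt{1-4\alpha^2})/(2\alpha)\in(0,1)$; this recovers the Katz index of the biinfinite path, reflecting the intuition that $C_n$ locally resembles $\Z$ for large $n$. An alternative combinatorial route would use the walk count $(A_n^t)_{i,j}=\sum_{m\in\Z}\binom{t}{(t+d+mn)/2}$ and bound the ``wrap-around'' contributions with $m\neq 0$ by $\sum_{t\geq n-|d|}(2\alpha)^t\to 0$, but the Fourier argument is cleaner.
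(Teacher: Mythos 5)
Your proof is correct, and it takes a genuinely different route from the paper's. The paper works from its explicit closed formula for $K_{C_n}(\alpha)_{i,j}$ (Theorem~\ref{thm:cycle-katz}), rewrites the denominator via Lemma~\ref{lem:katD}, and then extracts the limit from the asymptotics of the ratios $d_{n+k}(\alpha)/d_n(\alpha)$ and $\alpha^n/d_n(\alpha)$ (Theorem~\ref{thm:alldfacts}, Parts~\ref{dfactratio} and~\ref{dfactratioalpha}); this forces a four-way case analysis on the parities of $n$ and $|j-i|$ but yields the limit in the closed form $\frac{\alpha^{2m}}{1-4\alpha^2}\bigl(\frac{2}{1+\sqrt{1-4\alpha^2}}\bigr)^{2m-2}[\cdots]$ that meshes with the rest of the paper's $d_n$-machinery. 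You instead diagonalize the circulant $A_n$, observe that $\rho(A_n)=2$ uniformly in $n$ so the geometric series can be summed termwise, and recognize $K_{C_n}(\alpha)_{i,j}$ as an equispaced Riemann sum of the continuous function $g_\alpha(\theta)=\frac{2\alpha\cos\theta}{1-2\alpha\cos\theta}e^{\mathbf{i}d\theta}$, whence convergence to $\frac{1}{2\pi}\int_0^{2\pi}g_\alpha$ is immediate. The two key points you rightly flag --- the uniform lower bound $1-2\alpha\cos\theta\geq 1-2\alpha>0$ and the fact that $d=i-j$ is fixed as $n$ grows --- are exactly what make the argument airtight, and the interchange of the finite sum over $k$ with the absolutely convergent sum over $t$ is indeed routine. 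Your approach buys uniformity (no parity cases), an interpretation of the limit as the Katz index of the biinfinite path, and an integral representation evaluable by residues; the paper's approach buys a limit already expressed in the same algebraic vocabulary ($d_n(\alpha)$, $\frac{2}{1+\sqrt{1-4\alpha^2}}$) used for the path-graph convergence and for the ordering results in Section~\ref{sec:relationship}, at the cost of more bookkeeping.
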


The calculation of Katz similarity index on paths has perhaps an unexpected connection to a generalization of Dyck paths.  Recall, a \emph{Dyck Path} of length $2n$ is a staircase path on a lattice from the point $(0,0)$ to the point $(n,n)$ with the restriction that the path does not go above the diagonal $x=y$.  
The original definition of a Dyck path has been generalized in a number of different ways.  For example the restriction on walks not crossing the $x$-axis can be relaxed, as in the so-called GranDyck paths or Dyck bridges (see \cite{Comtet, Pergola, Ferrari, dastidar}); the ending point $(n,n)$ can be replaced by a more general point $(a,b)$ (see \cite{Imaoka}). Our work on the Katz similarity index, in relation to walks on paths of fixed length gives both new results and new proofs.  This would take us too far afield of the results presented in this paper but the interested reader is refered to~\cite{DyckPaths}.

In Section~\ref{sec:KatzDistance} we derive exact formulas for the Katz similarity index on path and cycle graphs. In Section~\ref{sec:Convergence} we consider how the Katz similarity index is affected by the size of the graph and show that both $K_{P_n}(\alpha)_{i,j}$ and $K_{C_n}(\alpha)_{i,j}$ converge as $n$ goes to $\infty$.  
Finally in Section~\ref{sec:relationship} we show Theorems~\ref{thm:cycleordering} and~\ref{conj:Katzordering}, and present a conjecture strengthening Theorem~\ref{conj:Katzordering}.

\section{Katz similarity index in Path and Cycle Graphs}\label{sec:KatzDistance}
In this section we give explicit formulas for the Katz similarity index between any two vertices on a cycle or path graph. These formulas will be used in Section~\ref{sec:Convergence} to show the convergence of the Katz similarity index as the number of vertices goes to infinity and in Section~\ref{sec:relationship} as we consider the relationship between Katz similarity index and the more well-known resistance distance.

First, we introduce a combinatorial function that will be useful to us throughout this paper, and which is interesting in its own right. 

\begin{defn}\label{def:dn}
    \[
d_i(\alpha) := \sum_{m = 0}^{\lfloor{i/2}\rfloor} (-1)^m { i-m \choose m} \alpha^{2m}.
\]
\end{defn}
Next, we collect a handful of useful equalities and relations describing some of the surprising Fibonacci-like behavior of this function, including an alternative recursive definition for it.    The proofs of the various parts of the theorem can be found in the Appendix. 

\begin{theorem}\label{thm:alldfacts}
For $d_i(\alpha)$ as defined in Definition~\ref{def:dn}, the following hold true:
\begin{enumerate}
    \item \label{dfactrecursion} 
    The sequence of functions $d_n(\alpha)$ can be defined recursively: 
    $$d_0(\alpha) = d_{1}(\alpha) = 1. \quad \text{For }i\geq 2, \ d_n(\alpha) = d_{n-1}(\alpha) - \alpha^2d_{n-2}(\alpha) .$$
    \item \label{dfactrec2}
     For $n\geq k \geq 1$, $d_n(\alpha) = d_k(\alpha) d_{n-k}(\alpha)-\alpha^2d_{k-1}(\alpha)d_{n-k-1}(\alpha)$.\\
    \item \label{dfactprod} 
    For $n\geq k\geq 1$, $d_k(\alpha)d_n(\alpha)-d_{k-1}(\alpha)d_{n+1}(\alpha) = \alpha^{2k}d_{n-k}(\alpha)$.\\
    \item \label{dfactboundseasy} 
    For $\alpha \in (0,.5)$ and $n\geq 1$, the sequence of functions $d_n(\alpha)$ satisfies the following bounds:  $$d_{n-1}(\alpha)>d_{n}(\alpha)>\frac12 d_{n-1}(\alpha)>0.$$
    \item \label{dfactatendpoints} 
    For $n\geq 0$,  $\displaystyle d_n\left(\frac 1 2\right) = \frac{n+1}{2^{n}}$ and 
    $\displaystyle d_n\left(\frac 1 {\sqrt{5}}\right) = \frac{(1+\phi)^n-\phi^n}{5^{ \frac n 2 }}$).     \\
    \item \label{dfactratio}
    $\displaystyle \lim_{n \to \infty} \frac{d_{n+k}(\alpha)}{d_n(\alpha)} = \frac{(1+\sqrt{1-4\alpha^2})^k}{2^k}$. \\ 
    \item \label{dfactratioalpha}
    $\displaystyle \lim_{n \to \infty} \frac{\alpha^n}{d_n(\alpha)} = 0$.\\
    \item \label{dfactBF}
    Let 
    $\displaystyle \mathbb{FR}_n =
    \frac{\Phi^{n+1}-\varphi^{n+1}}{\sqrt{5}(\Phi^n - \varphi^n)}$,
    where $\varphi = \frac{\sqrt{5}-1}{2}$, and $\Phi = \frac{\sqrt{5}+1}{2}$.
    Then,
    for $\alpha \in \left(0,\frac{1}{\sqrt{5}}\right)$ and $n\geq 1$, $$d_n \geq \mathbb{FR}_n \cdot d_{n-1}(\alpha).$$
\end{enumerate}

\end{theorem}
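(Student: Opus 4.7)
My plan is to treat Theorem \ref{thm:alldfacts} as a connected unit, since every part from (2) onward leans on the recursion established in (1). The thread running through the whole theorem is the two-term linear recurrence $d_n = d_{n-1} - \alpha^2 d_{n-2}$, whose characteristic polynomial $x^2 - x + \alpha^2$ has roots $r_\pm(\alpha) = (1 \pm \sqrt{1-4\alpha^2})/2$. For part (1), I would apply Pascal's identity $\binom{n-m}{m} = \binom{n-1-m}{m} + \binom{n-1-m}{m-1}$ inside Definition \ref{def:dn}: the first piece reassembles into $d_{n-1}(\alpha)$, and reindexing $m \to m+1$ in the second piece extracts a factor of $-\alpha^2$ and produces $d_{n-2}(\alpha)$.

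Parts (2), (3), and (4) are then inductions driven by this recursion. For (3), I would expand $d_k d_n - d_{k-1} d_{n+1}$ using $d_k = d_{k-1} - \alpha^2 d_{k-2}$; the cross terms reorganize into $\alpha^2(d_{k-1} d_{n-1} - d_{k-2} d_n)$, and the inductive hypothesis then produces the factor $\alpha^{2k}$ outright. Part (2) admits a parallel induction on $k$, with base case $k=1$ being exactly (1). For (4), I would induct on $n$: the upper bound $d_{n-1} > d_n$ is immediate from positivity of $\alpha^2 d_{n-2}$, while $d_n > \tfrac12 d_{n-1}$ reduces to $\alpha^2 d_{n-2} < \tfrac12 d_{n-1}$, which the inductive hypothesis $d_{n-1} > \tfrac12 d_{n-2}$ together with $\alpha^2 < \tfrac14$ supplies.

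For (5), I would solve the recursion at the stated values of $\alpha$ subject to $d_0 = d_1 = 1$: at $\alpha = 1/2$ the characteristic polynomial has a repeated root $1/2$ giving $d_n = (An+B)/2^n$, and at $\alpha = 1/\sqrt{5}$ one obtains a Binet-style closed form using the roots $\Phi/\sqrt{5}$ and $\varphi/\sqrt{5}$. Parts (6) and (7) then follow from dominant-root asymptotics: with $d_n = A r_+(\alpha)^n + B r_-(\alpha)^n$ and $|r_-| < r_+$, the ratio $d_{n+k}/d_n \to r_+^k$, and since $r_+(\alpha) > \alpha$ on $(0, 1/2)$ a geometric argument gives $\alpha^n/d_n \to 0$.

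Part (8) is what I expect to be the main obstacle, and the key observation that unlocks it is that $\mathbb{FR}_n$ is precisely the ratio $d_n(1/\sqrt{5})/d_{n-1}(1/\sqrt{5})$ extracted from part (5). Dividing (1) by $d_{n-1}(\alpha)$ shows the ratio $r_n(\alpha) := d_n(\alpha)/d_{n-1}(\alpha)$ satisfies $r_n(\alpha) = 1 - \alpha^2/r_{n-1}(\alpha)$, and analogously $\mathbb{FR}_n = 1 - (1/5)/\mathbb{FR}_{n-1}$. I would then induct on $n$ with base $r_1(\alpha) = \mathbb{FR}_1 = 1$: assuming $r_{n-1}(\alpha) \geq \mathbb{FR}_{n-1} > 0$, monotonicity of $r \mapsto 1 - \alpha^2/r$ in $r$ yields $r_n(\alpha) \geq 1 - \alpha^2/\mathbb{FR}_{n-1}$, which is in turn $\geq \mathbb{FR}_n$ precisely because $\alpha^2 \leq 1/5$. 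Positivity of $\mathbb{FR}_{n-1}$ at each stage, needed to legitimize the division, follows from applying (4) at $\alpha = 1/\sqrt{5} < 1/2$; this self-consistency is the main bookkeeping subtlety, but it falls into place cleanly once one notices that the two ratio sequences satisfy identical functional recurrences and the whole argument reduces to comparing $\alpha^2$ to $1/5$.
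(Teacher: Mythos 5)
Your proposal is correct, and for Parts 1--4 and 8 it follows essentially the paper's own route: Pascal's identity with the parity bookkeeping for Part 1, inductions on $k$ driven by the recursion for Parts 2--3, the two-sided induction for Part 4, and for Part 8 the identity $\mathbb{FR}_{n-1}(1-\mathbb{FR}_n)=\tfrac15$ in the guise of the ratio recursion $\mathbb{FR}_n = 1 - \tfrac{1/5}{\mathbb{FR}_{n-1}}$ --- your framing of $\mathbb{FR}_n$ as $d_n(1/\sqrt5)/d_{n-1}(1/\sqrt5)$ actually explains why that identity holds, which the paper leaves as "an easy exercise." Where you genuinely diverge is Parts 5--7: you solve the recurrence via its characteristic roots $r_\pm=(1\pm\sqrt{1-4\alpha^2})/2$ and read off Parts 6 and 7 from dominant-root asymptotics, whereas the paper verifies the closed forms of Part 5 by induction, proves Part 6 through the exact identity $d_{n+1}^2-d_{n+1}d_n+\alpha^2 d_n^2=\alpha^{2n+2}$ (a consequence of Part 3) together with telescoping of ratios, and proves Part 7 via the explicit bound $d_n\geq \tfrac{n+1}{2n}d_{n-1}$, giving $\alpha^n/d_n\leq 2\alpha/(n+1)$. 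Your Binet approach is more uniform and standard, but it carries two small obligations the paper's elementary route avoids: you must check that the coefficient of the dominant root is nonzero (here $A=(1-r_-)/(r_+-r_-)>0$ since $r_-<1$), and you must treat the repeated-root case $\alpha=1/2$ separately if Part 6 is meant to include that endpoint (the relevant range elsewhere in the paper is $\alpha<1/2$, so this is cosmetic). Conversely, the paper's identity-based proof of Part 6 quietly divides by $d_n^2$ without comment, so neither write-up is airtight as stated; both are easily repaired.
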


\subsection{Closed formulae for $K_{i,j}^n(\alpha)$ for the path and cycle graphs.}
We begin by determining a formula for the Katz similarity index between nodes $i$ and $j$ in a path of length $n$ as a function of $\alpha$. 
\begin{thm}\label{thm:Katzdist}
Let $P_n$ be the path graph on $n$ vertices, then 
\[
K_{P_n}(\alpha)_{i,j} = \frac{\alpha^{j-i} d_{i-1}(\alpha)d_{n-j}(\alpha)}{d_n(\alpha)}, 
\]
where $d_i$ is the function in Definition~\ref{def:dn}.
\end{thm}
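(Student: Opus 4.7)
Since $K_{P_n}(\alpha) = (I-\alpha A_n)^{-1}-I$, for $i\neq j$ we have $K_{P_n}(\alpha)_{i,j} = (M^{-1})_{i,j}$, where $M:=I-\alpha A_n$ is the $n\times n$ tridiagonal matrix with $1$ on the diagonal and $-\alpha$ on each off-diagonal. The plan is to compute $(M^{-1})_{i,j}$ by Cramer's rule, identifying both $\det M$ and the relevant cofactor as products of $d_k(\alpha)$'s read off from the block structure forced by tridiagonality.

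For the denominator, let $D_k:=\det(I-\alpha A_k)$. Expanding along the first row gives $D_k = D_{k-1}-\alpha^2 D_{k-2}$ with $D_0=D_1=1$, which matches the recursion in Theorem~\ref{thm:alldfacts}(\ref{dfactrecursion}). Hence $D_k = d_k(\alpha)$ for all $k\ge 0$, and in particular $\det M = d_n(\alpha)$.

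For the numerator, assume $i<j$ (the opposite case follows from symmetry of $A_n$). Delete row $j$ and column $i$ from $M$, and partition the remaining rows as $\{1,\ldots,i-1\}\cup\{i,\ldots,j-1\}\cup\{j+1,\ldots,n\}$ and the remaining columns as $\{1,\ldots,i-1\}\cup\{i+1,\ldots,j\}\cup\{j+1,\ldots,n\}$. A direct check using $M_{k,\ell}=0$ for $|k-\ell|>1$ shows that every block strictly above this block-diagonal vanishes, so the minor is block lower-triangular. Its diagonal blocks are $I-\alpha A_{i-1}$ (determinant $d_{i-1}(\alpha)$); a $(j-i)\times(j-i)$ lower-triangular block that carries $-\alpha$ on its own diagonal, since the entries $M_{k,k+1}=-\alpha$ for $i\le k\le j-1$ end up on this diagonal after the column shift, and hence has determinant $(-\alpha)^{j-i}$; and $I-\alpha A_{n-j}$ (determinant $d_{n-j}(\alpha)$). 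Multiplying and inserting the Cramer sign $(-1)^{i+j}$, the overall sign collapses because $(-1)^{i+j}(-1)^{j-i} = 1$, yielding
\[
(M^{-1})_{i,j} = \frac{\alpha^{j-i}\, d_{i-1}(\alpha)\, d_{n-j}(\alpha)}{d_n(\alpha)},
\]
as claimed.

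The computation is mostly bookkeeping once the block-triangular structure is recognized; the only point requiring care is verifying that no stray nonzero entry bleeds above the block-diagonal and that the middle block really has the shape described (in particular, that $M_{i-1,i}=-\alpha$ is removed by the deletion of column $i$, so the top-left and middle blocks genuinely decouple). A fully viable alternative, if one prefers to avoid block determinants, is to solve the tridiagonal system $Mx=e_j$ via the ansatz $x_k=C\,d_{k-1}(\alpha)\alpha^{j-k}$ for $k\le j$ and $x_k=D\,d_{n-k}(\alpha)\alpha^{k-j}$ for $k\ge j$. Continuity and the jump condition at $k=j$ then invoke identity~\ref{dfactrec2} of Theorem~\ref{thm:alldfacts}, namely $d_n=d_jd_{n-j}-\alpha^2 d_{j-1}d_{n-j-1}$, to produce the denominator $d_n(\alpha)$.
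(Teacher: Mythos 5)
Your proposal is correct and follows essentially the same route as the paper: Cramer's rule, with the minor recognized as block-triangular so that its determinant factors as $d_{i-1}(\alpha)\cdot(-\alpha)^{j-i}\cdot d_{n-j}(\alpha)$, and the sign $(-1)^{i+j}(-\alpha)^{j-i}=\alpha^{j-i}$ absorbed at the end. The only cosmetic differences are that you use a single three-block lower-triangular decomposition where the paper applies the $2\times 2$ block-determinant formula twice, and that you rederive $\det(I-\alpha A_{P_n})=d_n(\alpha)$ from the recursion rather than citing it as the paper does.
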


 The proof of this result will require the following theorem, which originally gave the determinant of certain tridiagonal matrices.

     \begin{theorem}\label{thm:path-det}\cite[Equation 5]{thmrefHu}
If $G$ is a path graph on $n$ vertices, and $A$ its adjacency matrix, then 
\[
\det (I-\alpha A_{P_n}) = d_n(\alpha).
\]

\end{theorem}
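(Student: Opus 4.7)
The plan is to proceed by induction on $n$, matching the recurrence we obtain for $\det(I - \alpha A_{P_n})$ against the one already established for $d_n(\alpha)$ in part (1) of Theorem~\ref{thm:alldfacts}. The matrix $I - \alpha A_{P_n}$ is tridiagonal, with $1$'s on the main diagonal and $-\alpha$'s on both the sub- and super-diagonals. Writing $D_n(\alpha) := \det(I - \alpha A_{P_n})$, the base cases are immediate: $D_0(\alpha) = 1 = d_0(\alpha)$ (empty determinant) and $D_1(\alpha) = 1 = d_1(\alpha)$ (the $1\times 1$ matrix $(1)$).

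For the inductive step, I would expand $D_n(\alpha)$ along the first row. Only two entries of that row are nonzero: the $(1,1)$-entry, equal to $1$, whose complementary minor is precisely the tridiagonal matrix associated to $P_{n-1}$ and so yields $D_{n-1}(\alpha)$; and the $(1,2)$-entry, equal to $-\alpha$. After removing row $1$ and column $2$, the resulting $(n-1)\times(n-1)$ minor has first column $(-\alpha, 0, \dots, 0)^T$, so a further cofactor expansion along that column reduces it to $-\alpha \cdot D_{n-2}(\alpha)$. Carrying the $(-1)^{1+2}$ sign produces
\[
D_n(\alpha) = D_{n-1}(\alpha) - \alpha^2 D_{n-2}(\alpha),
\]
which is exactly the recurrence defining $d_n(\alpha)$. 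Together with the matching initial values, induction concludes the argument.

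An alternative route is a direct permutation-sum expansion of the determinant: for a tridiagonal matrix the only permutations that contribute nontrivially are products of disjoint transpositions of adjacent indices, and the number of such permutations with exactly $m$ transpositions on $\{1,\dots,n\}$ is the classical matching count $\binom{n-m}{m}$ of $P_n$; each such permutation contributes sign $(-1)^m$ and numerical value $\alpha^{2m}$ (one $(-\alpha)(-\alpha)=\alpha^2$ per transposition), directly reconstructing the closed-form definition of $d_n(\alpha)$ in Definition~\ref{def:dn}. Either way the argument is routine; there is no real obstacle beyond keeping track of the two successive cofactor signs in the first approach, and Theorem~\ref{thm:alldfacts}(1) has already packaged the $d_n$ side of the recurrence so that the identification is immediate.
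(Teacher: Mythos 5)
Your proposal is correct, but note that the paper does not actually prove this statement at all: Theorem~\ref{thm:path-det} is imported verbatim from the literature (cited as Equation 5 of the reference), so there is no in-paper argument to compare against. Your first route --- cofactor expansion along the first row giving $D_n = D_{n-1} - \alpha^2 D_{n-2}$, then matching initial conditions against the recurrence in Theorem~\ref{thm:alldfacts}~Part~\ref{dfactrecursion} --- is the standard continuant argument for tridiagonal determinants, and your sign bookkeeping ($(-1)^{1+2}\cdot(-\alpha)\cdot(-\alpha)D_{n-2} = -\alpha^2 D_{n-2}$) is right; the base cases $D_1 = 1$, $D_2 = 1-\alpha^2$ agree with $d_1, d_2$. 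Your second route is also sound and arguably more illuminating, since it recovers the closed form of Definition~\ref{def:dn} directly rather than passing through the recurrence: the one assertion you leave unproved --- that the only permutations contributing to a tridiagonal determinant are products of disjoint adjacent transpositions --- is a genuine (if standard) lemma, provable by taking the least $i$ with $\sigma(i) = i+1$ and forcing $\sigma(i+1) = i$; with that in place, the count $\binom{n-m}{m}$ of size-$m$ matchings of $P_n$, the sign $(-1)^m$, and the weight $\alpha^{2m}$ assemble exactly into $d_n(\alpha)$. Either argument would serve as a self-contained replacement for the citation.
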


\begin{proof}[Proof of Theorem \ref{thm:Katzdist}]

Let $M_{P_n} = (I - \alpha A_{P_n})$ for a path on $n$ nodes.  We can find the entries of $M_{P_n}^{-1}$ using Cramer's rule: 
\[
(M_{P_n})_{i,j}^{-1} =(-1)^{i+j}\frac{\det M_{P_n}(i,j)}{\det M_{P_n}}
\]
where $M_{P_n}(i,j)$ is the matrix obtained from $M_{P_n}$ by deleting the $i$th row and the $j$th column.
Consider the determinant of $M_{P_n}(i,j)$ and, without loss of generality, assume that $i \leq j$ since $M$ is symmetric. Then, 
\[
M_{P_n} = \begin{bmatrix} B & C\\D & E\end{bmatrix},
\]
where  
 $B$ is $(j-1)\times(j-1)$, $C$ is $(j-1) \times (n-j)$, $D$ is $(n-j) \times (j-1)$, and $E$ is $(n-j) \times (n-j)$. 
Recall the well-known formula for block matrix determinants, which states that, assuming block $E$ is invertible, 
\[
\det\begin{bmatrix} B & C\\D & E\end{bmatrix} = \det(B-CE^{-1}D)\text{det}(E).
\] 
Observe that  $E$ has $1$ on the diagonal and $-\alpha$ on the super/sub diagonal so it is, in fact, just $M_{P_{n-j}}$. Hence by Theorem~\ref{thm:path-det}, the determinant is equal to $d_{n-j}(\alpha)$, which is positive (by Theorem~\ref{thm:alldfacts} Part~\ref{dfactboundseasy}), and therefore invertible.  
Moreover we observe that the matrix $D$ is identically zero so 
\[
\det(M_{P_n}(i,j)) = \det(B-CE^{-1}D)\det(E) = \det(B)\det(E)=\det(B)d_{n-j}(\alpha).
\]  

We now consider the determinant of $B$.  
We again use the block matrix determinant formula this time setting 
\[
B=\begin{bmatrix} B' & C'\\D' & E'\end{bmatrix},
\]
where $B'$ is $(i-1)\times(i-1)$, $C'$ is $(i-1) \times (j-i)$, $D'$ is $(j-i) \times (i-1)$, and $E'$ is $(j-i) \times (j-i)$.  
We note that $E'$ is upper triangular with $-\alpha$ on the diagonal and hence is invertible with determinant equal to $(-\alpha)^{j-i}$.  
Similar to before, $D'$ is identically equal to zero hence 
\[
\det(B) = \det(B'-C'E'^{-1}D')\det(E') = \det(B')\det(E').
\] 
Since $B'$ has $1$ on the diagonal and $-\alpha$ on the super/sub diagonal, by Theorem~\ref{thm:path-det} it has determinant $d_{i-1}(\alpha)$.  
Putting everything together gives the desired result of 
\[
M_{P_n}(i,j) = (-\alpha)^{j-i}d_{i-1}(\alpha)d_{n-j}(\alpha).
\]
The formula for $(M_{P_n})_{{ij}}^{-1}$ follows by plugging the values into Cramer's rule.

\end{proof}

We now give an exact formula for Katz similarity index between pairs of vertices on a cycle. 
\begin{theorem}\label{thm:cycle-katz}
Let $C_n$ denote the cycle graph with $n>4$ vertices and $1\leq i< j \leq n$ and $0<\alpha<\frac{1}{\rho(A)}$, then

\begin{equation}\label{eq:kcformula}
K_{C_n}(\alpha)_{i,j} = 
\frac{\alpha^{k}d_{n-k-1}(\alpha)+\alpha^{n-k}d_{k-1}(\alpha)}
{d_{n-1}(\alpha) - 2\alpha^n -2\alpha^2 d_{n-2}(\alpha)} 
- \mathbf{1}_{ij},
\end{equation}
where {$k = \min(j-i, n-j+i)$}, and $ \mathbf{1}_{ij}$ is the Kronecker delta function, equal to $1$ if $i= j$ and $0$ otherwise.

\end{theorem}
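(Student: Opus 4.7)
My plan is to express $M_{C_n} := I - \alpha A_{C_n}$ as a rank-two perturbation of the path matrix $M_{P_n} := I - \alpha A_{P_n}$, namely
\[ M_{C_n} = M_{P_n} - \alpha\,UV^{T}, \qquad U = [\,e_1 \mid e_n\,],\quad V = [\,e_n \mid e_1\,], \]
and then apply the Woodbury identity. This reduces everything to corner entries of $M_{P_n}^{-1}$ that Theorem~\ref{thm:Katzdist} supplies in closed form. By the cyclic symmetry of $A_{C_n}$ it suffices to treat the representative $(i,j) = (1,k+1)$ with $1\leq k \leq \lfloor n/2 \rfloor$; the remaining cases follow by relabeling.

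The first half of the argument fixes the denominator. Theorem~\ref{thm:Katzdist} gives the four relevant corner values $(M_{P_n}^{-1})_{1,1} = (M_{P_n}^{-1})_{n,n} = d_{n-1}(\alpha)/d_n(\alpha)$ and $(M_{P_n}^{-1})_{1,n} = (M_{P_n}^{-1})_{n,1} = \alpha^{n-1}/d_n(\alpha)$, so the $2\times 2$ Schur factor $S := I - \alpha V^{T}M_{P_n}^{-1}U$ is itself symmetric with
\[ \det S = \frac{(d_n(\alpha)-\alpha^n)^2 - \alpha^2 d_{n-1}(\alpha)^2}{d_n(\alpha)^2}. \]
Using the recursion $d_n = d_{n-1} - \alpha^2 d_{n-2}$ together with the identity $d_{n-1}^2 - d_n d_{n-2} = \alpha^{2(n-1)}$ (a special case of Theorem~\ref{thm:alldfacts} Part~\ref{dfactprod}), the numerator collapses to $d_n(\alpha)\cdot D$ with $D := d_{n-1}(\alpha) - 2\alpha^n - 2\alpha^2 d_{n-2}(\alpha)$. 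The matrix determinant lemma then yields $\det(M_{C_n}) = \det(M_{P_n})\cdot\det S = D$, matching the denominator in \eqref{eq:kcformula}.

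The second half computes the numerator. Woodbury writes
\[ (M_{C_n}^{-1})_{1,k+1} \;=\; (M_{P_n}^{-1})_{1,k+1} \;+\; \alpha\,\bigl[M_{P_n}^{-1}U\bigr]_{1,\cdot}\, S^{-1}\,\bigl[V^{T}M_{P_n}^{-1}\bigr]_{\cdot,k+1}, \]
and each of the two two-component vectors on the right has entries of the form $\alpha^{\ast}d_{\ast}(\alpha)/d_n(\alpha)$ read off from rows and columns $1$ and $n$ of $M_{P_n}^{-1}$. Expanding yields a small linear combination of monomials $\alpha^{a}d_{b}(\alpha)d_{c}(\alpha)$ over $d_n(\alpha)^2 D$, which I would combine with the path-graph term $\alpha^k d_{n-k-1}(\alpha)/d_n(\alpha)$ and simplify. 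The main obstacle is this algebraic collapse: showing that the total reduces to $(\alpha^k d_{n-k-1}(\alpha) + \alpha^{n-k}d_{k-1}(\alpha))/D$ requires repeated use of Theorem~\ref{thm:alldfacts} Parts~\ref{dfactrec2} and~\ref{dfactprod} to rewrite products of $d$-functions as single $d$-functions of shifted index. The two surviving summands can be understood as the contributions of the \emph{short} ($k$-edge) and \emph{long} ($(n-k)$-edge) routes around the cycle, and careful book-keeping of indices and of the symmetric off-diagonal entries of $S^{-1}$ is where the calculation is most error-prone. Finally, the $-\mathbf{1}_{ij}$ term is inherited directly from the definition $K_{C_n}(\alpha) = M_{C_n}^{-1} - I$.
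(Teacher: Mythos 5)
Your proof is correct but proceeds by a genuinely different route from the paper's. The paper proves the formula entry-by-entry with Cramer's rule: it invokes Lemma~\ref{lem:cycle-det} for $\det(I-\alpha A_{C_n})$ (itself proved by cofactor expansion), then computes the cofactor $\det(M_{C_n}(1,r))$ via a block-determinant decomposition in which the Schur complement $B-CE^{-1}D$ is an explicit almost-triangular matrix. You instead treat $M_{C_n}$ as the rank-two perturbation $M_{P_n}-\alpha UV^{T}$ and apply the matrix determinant lemma and Woodbury, so that everything reduces to the four corner entries and the two entries $(M_{P_n}^{-1})_{1,k+1}$, $(M_{P_n}^{-1})_{n,k+1}$ supplied by Theorem~\ref{thm:Katzdist}. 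Your denominator computation is complete and correct, and in fact constitutes an independent proof of Lemma~\ref{lem:cycle-det}: the identity $d_{n-1}^2-d_nd_{n-2}=\alpha^{2(n-1)}$ you cite is Part~\ref{dfactprod} with $k=n-1$, and it does collapse $(d_n-\alpha^n)^2-\alpha^2d_{n-1}^2$ to $d_n\cdot D$. The numerator step you leave as a sketch, but I checked that it closes: writing $p=d_{n-1}/d_n$, $q=\alpha^{n-1}/d_n$, $u=(M_{P_n}^{-1})_{1,k+1}$, $v=(M_{P_n}^{-1})_{n,k+1}$, the Woodbury correction is $\frac{\alpha d_n}{D}\bigl[(1-\alpha q)(pv+qu)+\alpha p(pu+qv)\bigr]$; the $d_{n-k-1}$ terms combine with $u$ using $Dd_n+\alpha^nd_n-\alpha^{2n}+\alpha^2d_{n-1}^2=d_n^2-\alpha^nd_n$, and the remaining terms match $\alpha^{n-k}d_{k-1}/D$ precisely because $d_kd_{n-1}-d_{k-1}d_n=\alpha^{2k}d_{n-k-1}$ (Part~\ref{dfactprod} with $n$ shifted to $n-1$); Part~\ref{dfactrec2} is not actually needed. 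What your approach buys is a cleaner structural explanation --- the two surviving summands visibly correspond to the short and long routes around the cycle, and the cycle determinant falls out of the path determinant with no new cofactor expansion --- at the cost of a somewhat denser $2\times 2$ bookkeeping step that the paper avoids by expanding cofactors directly.
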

To prove this result, we will need the following lemma.  Since the proof is similar to that for the path, we present it in the Appendix.
\begin{lemma}
\label{lem:cycle-det}
For a cycle H with $n>4$ vertices with adjacency matrix $A_{C_n}$ and $0<\alpha<\frac{1}{\rho(A_{C_n})}$,

\[det(I-\alpha A_{C_n}) = d_{n-1}(\alpha) - 2\alpha^n -2\alpha^2 d_{n-2}(\alpha).\]
\end{lemma}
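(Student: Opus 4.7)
My plan is to apply the matrix determinant lemma to a rank-two perturbation. The cycle adjacency matrix differs from the path adjacency matrix only at the two wrap-around entries $(1,n)$ and $(n,1)$, so setting $T_n := I - \alpha A_{P_n}$ we have
\[
I - \alpha A_{C_n} \;=\; T_n \;-\; \alpha\bigl(e_1 e_n^T + e_n e_1^T\bigr),
\]
where $e_i$ is the $i$-th standard basis vector. Because $\rho(A_{C_n})=2$ forces $\alpha \in (0,1/2)$, Theorem~\ref{thm:path-det} together with Theorem~\ref{thm:alldfacts} Part~\ref{dfactboundseasy} gives $\det T_n = d_n(\alpha) > 0$, so $T_n$ is invertible and the matrix determinant lemma is in force.

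I would write the rank-two update as $UV^T$ with $U = -\alpha\,[e_1\ e_n]$ and $V = [e_n\ e_1]$. The matrix determinant lemma then yields
\[
\det(I - \alpha A_{C_n}) \;=\; d_n(\alpha) \cdot \det\!\bigl(I_2 + V^T T_n^{-1} U\bigr),
\]
and the inner $2\times 2$ determinant depends only on the four corner entries of $T_n^{-1}$. Those entries are available essentially for free from the proof of Theorem~\ref{thm:Katzdist} via Cramer's rule and the persymmetry of $T_n$:
\[
(T_n^{-1})_{1,1} = (T_n^{-1})_{n,n} = \frac{d_{n-1}(\alpha)}{d_n(\alpha)}, \qquad
(T_n^{-1})_{1,n} = (T_n^{-1})_{n,1} = \frac{\alpha^{n-1}}{d_n(\alpha)}.
\]
Plugging these in and expanding the $2\times 2$ determinant produces the intermediate form
\[
\det(I - \alpha A_{C_n}) \;=\; \frac{(d_n - \alpha^n)^2 - \alpha^2\, d_{n-1}^2}{d_n}.
\]

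The proof is then finished by a two-stage algebraic collapse. First, Theorem~\ref{thm:alldfacts} Part~\ref{dfactprod} with both indices specialized to $n-1$ gives $d_{n-1}^2 - d_{n-2}\,d_n = \alpha^{2n-2}$. Using this to substitute $\alpha^2 d_{n-1}^2 = \alpha^{2n} + \alpha^2 d_{n-2} d_n$ cancels the $\alpha^{2n}$ terms in the numerator, and dividing through by $d_n$ leaves $d_n - 2\alpha^n - \alpha^2 d_{n-2}$. A final application of the basic recursion $d_n = d_{n-1} - \alpha^2 d_{n-2}$ from Theorem~\ref{thm:alldfacts} Part~\ref{dfactrecursion} rewrites this as $d_{n-1}(\alpha) - 2\alpha^n - 2\alpha^2 d_{n-2}(\alpha)$, matching the claim.

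The main obstacle I expect is this final simplification: the raw expression $(d_n - \alpha^n)^2 - \alpha^2 d_{n-1}^2$ does not visibly resemble the target, and the argument hinges on recognizing that Theorem~\ref{thm:alldfacts} Part~\ref{dfactprod} (a Cassini-style identity for the $d_i$ sequence) is precisely what makes the $\alpha^{2n}$ cross-terms cancel. An elementary alternative --- cofactor expansion along the first row of $I - \alpha A_{C_n}$, which has the three nonzero entries $1,\,-\alpha,\,-\alpha$ --- is also available, but handling the two wrap-around minors requires further nested block-matrix expansions with careful sign-tracking of the $(-1)^n$ factors from the corner cofactors, so the rank-two update route feels noticeably cleaner.
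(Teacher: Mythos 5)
Your argument is correct, and it takes a genuinely different route from the paper. The paper proceeds by direct cofactor expansion of $\det(I-\alpha A_{C_n})$ along the first column, reducing to the three minors $M_{C_n}(1,1)$, $M_{C_n}(2,1)$, $M_{C_n}(n,1)$; the first is the path matrix $M_{P_{n-1}}$, and the other two are expanded once more along their first rows into a path block plus a triangular block, after which the $(-1)^n$ sign bookkeeping collapses and the target expression appears immediately, with no further identities needed. You instead treat the cycle as a rank-two perturbation of the path, invoke the matrix determinant lemma, read off the corner entries $(T_n^{-1})_{1,1}=(T_n^{-1})_{n,n}=d_{n-1}/d_n$ and $(T_n^{-1})_{1,n}=(T_n^{-1})_{n,1}=\alpha^{n-1}/d_n$ (both of which are indeed justified by the Cramer's-rule computation in the proof of Theorem~\ref{thm:Katzdist}, and the latter is exactly the quantity $\bar e_{1,n-r}$ the paper itself uses later), and then need the Cassini-type identity of Theorem~\ref{thm:alldfacts} Part~\ref{dfactprod} with $k=n-1$ to make $(d_n-\alpha^n)^2-\alpha^2 d_{n-1}^2$ collapse to $d_n(d_n-2\alpha^n-\alpha^2 d_{n-2})$; I verified this algebra and it is right. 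What each buys: the paper's expansion is more elementary and self-contained (only Theorem~\ref{thm:path-det} is needed), while yours avoids all sign-tracking at the cost of importing the inverse's corner entries and Part~\ref{dfactprod}; your route also makes transparent \emph{why} the answer is a small correction of $d_n$, and it generalizes readily to any graph obtained from a path by adding one edge.
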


\begin{proof}
[Proof of Theorem~\ref{thm:cycle-katz}]

Let $M_{C_n} = I-\alpha A_{C_n}$, where $A_{C_n}$ is the adjacency matrix of the cycle graph on $n$ vertices.
We first observe that given a sequence of connected nodes, we can rearrange the nodes by designating a new initial node. 
Thus for $i\leq j$ where $r =\min(j-i,n-j+i)+1$, $K_{C_n}(\alpha)_{i,j} = K_{C_n}(\alpha)_{1,r}$.
Thus we only need to calculate $K_{C_n}(\alpha)_{1,r}$.

Using Cramer's rule, we can find $M_{C_n}^{-1}$: 
\[(M_{C_n})_{i,j}^{-1} = (-1)^{i+j}\frac{\det(M_{C_n}(i,j))}{\det(M_{C_n})}.\]

By Lemma~\ref{lem:cycle-det}, we know $\det(M_{C_n}) = d_{n-1}(\alpha) - 2\alpha^n -2\alpha^2 d_{n-2}(\alpha)$.
Next, we calculate $\det(M_{C_n}(1,r))$.
If $r=1$, $M_{C_n}(1,1) =I - \alpha A_{P_{n-1}}$, where $A_{P_{n-1}}$  is the adjacency matrix of the path graph on $n-1$ vertices,  so by Theorem~\ref{thm:path-det}, $\det( M_{C_n}(1,1)) = d_{n-1}(\alpha)$.

Now consider the case where $1<r \leq \lfloor n/2 \rfloor$. Let 
\[
M_{C_n}(1,r) = \begin{bmatrix} B & C \\ D & E\end{bmatrix},
\]
 where $B$ is  $(r-1)\times (r-1)$, $C$ is  $(r-1)\times (n-r)$, $D$ is $(n-r) \times (r-1)$, and $E$ is  $(n-r)\times (n-r)$. 
 Note that $E=I - \alpha A_{P_{n-r}}$, where $A_{P_{n-r}}$ is the adjacency matrix of the path graph on $n-r$ vertices, and thus is invertible. Then, 
 \[
 \det(M(1,r)) = \det(B-CE^{-1}D)\det(E)= \det(B-CE^{-1}D)d_{n-r}(\alpha).
 \]
 Let $\bar e_{i,j}$ be the $(i,j)$th entry of $E^{-1}$.
Since the only nonzero entry of $D$ is the $(n-r,1)$th entry and the only nonzero entries of $C$  is the $(r-1, 1)$st entry, both of which are $-\alpha$, it is straightforward to verify that the only nonzero entry of
$
CE^{-1}D$ is  the $(r-1,1)$th entry, which, by Theorem~\ref{thm:Katzdist}, is  
\[
\alpha^2\bar e_{1,n-r} =   \frac{\alpha^{n-r+1}}{d_{n-r}(\alpha)}.
\]
Then,  
\[B-CE^{-1}D=\begin{bmatrix}-\alpha   &1&-\alpha & 0 &\cdots  &0\\
 0 & -\alpha & 1&-\alpha &\ddots &0 \\ 
 \vdots &  \ddots& \ddots & \ddots&\ddots &\vdots\\
  0 &  \ddots& \ddots & \ddots &-\alpha &1\\
 -\frac{\alpha^{n-r-1}}{d_{n-r}(\alpha)} & 0&\cdots&\cdots&0&-\alpha\\  
 \end{bmatrix}, \]
Using cofactor expansion along the first column, we get
\[\det(B-CE^{-1}D) = (-\alpha)^{r-1}- (-1)^{r}\frac{\alpha^{n-r+1}}{d_{n-r}(\alpha)}d_{r-2}(\alpha).\]
Thus 
\[
\det(M_{C_n}(1,r)) =(-1)^{r-1 }(\alpha^{r-1}d_{n-r}(\alpha)+\alpha^{n-r+1}d_{r-2}(\alpha)), 
\]
and thus 
\[
K_{C_n}(\alpha)_{1,r} =(-1)^{1+r}(-1)^{r-1 }\frac{\alpha^{r-1}d_{n-r}(\alpha)+\alpha^{n-r+1}d_{r-2}(\alpha)}{d_{n-1}(\alpha) - 2\alpha^n -2\alpha^2 d_{n-2}(\alpha)}.
\]
Recalling that $k =r-1$ this yields
\[K_{C_n}(\alpha)_{i,j} =\frac{\alpha^{k}d_{n-k-1}(\alpha)+\alpha^{n-k}d_{k-1}(\alpha)}{d_{n-1}(\alpha) - 2\alpha^n -2\alpha^2 d_{n-2}(\alpha)}.
\]

\end{proof}

\section{Convergence of Katz similarity index in paths and cycles}\label{sec:Convergence}
Curiously, it seems that the Katz similarity index between two points is largely independent of the value of $n$.  Recall that the Katz similarity score  counts walks between vertices, with greater emphasis placed on shorter walks, i.e., values of $\alpha$, $K_{G}(\alpha)_{i,j}$ is increasingly a measure of only local behavior. 

\subsection{Convergence on paths}
The goal of this subsection is to show that $K_{P_n}(\alpha)_{i,j}$ converges as $n$ goes to infinity.

\begin{pathconvth}
$K_{P_n}(\alpha)_{i,j}$ converges as $n$ goes to infinity. In fact, 
\[
\lim_{n \to \infty} K_{P_n}(\alpha)_{i,j}
=\alpha^{j-i}d_{i-1}(\alpha)\left(\frac{2}{1+\sqrt{1-4\alpha^2}}\right)^j
\]
\end{pathconvth}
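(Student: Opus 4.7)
The plan is to read off the closed form for $K_{P_n}(\alpha)_{i,j}$ from Theorem~\ref{thm:Katzdist} and then apply the asymptotic ratio formula from Theorem~\ref{thm:alldfacts} Part~\ref{dfactratio}. Concretely, by Theorem~\ref{thm:Katzdist},
\[
K_{P_n}(\alpha)_{i,j} \;=\; \alpha^{j-i}\, d_{i-1}(\alpha)\cdot \frac{d_{n-j}(\alpha)}{d_n(\alpha)},
\]
and the prefactor $\alpha^{j-i} d_{i-1}(\alpha)$ does not depend on $n$. So everything reduces to understanding $\lim_{n\to\infty} d_{n-j}(\alpha)/d_n(\alpha)$ for fixed $j$.

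To extract this limit from Part~\ref{dfactratio} of Theorem~\ref{thm:alldfacts}, I would set $m = n-j$, so that the ratio becomes $d_m(\alpha)/d_{m+j}(\alpha)$, i.e. the reciprocal of $d_{m+j}(\alpha)/d_m(\alpha)$. As $n\to\infty$, we also have $m\to\infty$, and Part~\ref{dfactratio} gives
\[
\lim_{m\to\infty} \frac{d_{m+j}(\alpha)}{d_m(\alpha)} \;=\; \left(\frac{1+\sqrt{1-4\alpha^2}}{2}\right)^{j}.
\]
Note that for $\alpha$ in the admissible range $(0,1/\rho(A_{P_n}))\subset(0,1/2)$ the right-hand side is strictly positive (since $1-4\alpha^2>0$), so inverting is legitimate. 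Taking reciprocals and combining with the prefactor yields the stated limit
\[
\lim_{n\to\infty} K_{P_n}(\alpha)_{i,j} \;=\; \alpha^{j-i}\, d_{i-1}(\alpha)\left(\frac{2}{1+\sqrt{1-4\alpha^2}}\right)^{j}.
\]

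There is no real obstacle: the proof is essentially a one-line substitution into an already-established asymptotic. The only things to be careful about are (i) confirming that the admissible range of $\alpha$ for the path graph lies inside $(0,1/2)$, which is needed to apply Part~\ref{dfactratio} cleanly and to ensure the limiting ratio is nonzero, and (ii) making the index shift $m=n-j$ explicit so that the statement of Part~\ref{dfactratio}, which is phrased as a ratio of the larger index over the smaller, applies directly. Both are routine.
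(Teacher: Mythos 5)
Your proposal is correct and follows exactly the paper's own argument: substitute the closed form from Theorem~\ref{thm:Katzdist} and apply the ratio limit of Theorem~\ref{thm:alldfacts} Part~\ref{dfactratio} to $d_{n-j}(\alpha)/d_n(\alpha)$. Your added care about the index shift $m=n-j$ and the positivity of the limiting ratio is a welcome (if routine) elaboration of the paper's one-line justification.
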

\begin{proof}
From Theorem~\ref{thm:Katzdist}, we have that 
\[
 K_{P_n}(\alpha)_{i,j}
=\alpha^{j-i}d_{i-1}(\alpha)\frac{d_{n-j}(\alpha)}{d(\alpha)}.
\]
    The result follows by applying Theorem~\ref{thm:alldfacts} Part~\ref{dfactratio}.
\end{proof}

\subsection{Convergence on cycles}
In this subsection we show that $K_{C_n}(\alpha)_{i,j}$ converges as $n$ goes to infinity. We first introduce a helpful lemma, that allows us to express the denominator in \eqref{eq:kcformula} in a simplified form, we then show  convergence on cycles.
\begin{lemma}\label{lem:katD} Let $D_n(\alpha) = d_{n-1} - 2\alpha^n - 2\alpha^2d_{n-2}(\alpha),$ where $d_i(\alpha)$ is a defined in Definition~\ref{def:dn}.  Then $$D_{2\ell}(\alpha) = (1-4\alpha^2)d_{\ell-1}^2, \quad D_{2\ell+1} = (1-2\alpha)(\alpha^{2\ell} + (1+2\alpha)d_{\ell}d_{\ell-1}).$$\end{lemma}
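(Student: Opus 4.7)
The plan is to split into the two parity cases $n=2\ell$ and $n=2\ell+1$ and apply the ``halving'' identity Theorem~\ref{thm:alldfacts} Part~\ref{dfactrec2} to rewrite the $d_{n-1}$ and $d_{n-2}$ appearing in $D_n$ as products of $d_\ell, d_{\ell-1}, d_{\ell-2}$. The isolated pure power $-2\alpha^n$ will then be absorbed using the product identity Theorem~\ref{thm:alldfacts} Part~\ref{dfactprod} specialized to $k=n=\ell-1$, namely $d_{\ell-1}^2-d_{\ell-2}d_\ell=\alpha^{2\ell-2}$, and the basic recursion Theorem~\ref{thm:alldfacts} Part~\ref{dfactrecursion} is used to collapse the remaining mixed terms.

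For the even case $n=2\ell$, I take $k=\ell-1$ in Part~\ref{dfactrec2} to obtain
\[
d_{2\ell-1} = d_{\ell-1}d_\ell - \alpha^{2}d_{\ell-2}d_{\ell-1}, \qquad d_{2\ell-2} = d_{\ell-1}^{2} - \alpha^{2}d_{\ell-2}^{2}.
\]
Then I replace $\alpha^{2\ell}$ by $\alpha^{2}(d_{\ell-1}^{2}-d_{\ell-2}d_\ell)$ using Part~\ref{dfactprod}, plug everything into $D_{2\ell}=d_{2\ell-1}-2\alpha^{2\ell}-2\alpha^{2}d_{2\ell-2}$, and use $d_\ell-d_{\ell-1}=-\alpha^{2}d_{\ell-2}$ to cancel the cross terms $\alpha^{2}d_{\ell-1}d_\ell$. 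All $d_{\ell-2}$ contributions disappear and what remains is $(1-4\alpha^{2})d_{\ell-1}^{2}$, as required.

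For the odd case $n=2\ell+1$, I take $k=\ell$ in Part~\ref{dfactrec2} to obtain
\[
d_{2\ell} = d_\ell^{2} - \alpha^{2}d_{\ell-1}^{2}, \qquad d_{2\ell-1} = d_\ell d_{\ell-1} - \alpha^{2}d_{\ell-1}d_{\ell-2}.
\]
Substituting into $D_{2\ell+1}=d_{2\ell}-2\alpha^{2\ell+1}-2\alpha^{2}d_{2\ell-1}$ and using the recursion once to rewrite $\alpha^{4}d_{\ell-1}d_{\ell-2}$ as $\alpha^{2}d_{\ell-1}^{2}-\alpha^{2}d_{\ell-1}d_\ell$ gives
\[
D_{2\ell+1} = d_\ell^{2} + \alpha^{2}d_{\ell-1}^{2} - 2\alpha^{2\ell+1} - 4\alpha^{2}d_\ell d_{\ell-1}.
\]
Expanding the target $(1-2\alpha)\bigl(\alpha^{2\ell}+(1+2\alpha)d_\ell d_{\ell-1}\bigr)$ and matching coefficients shows that the identity reduces to verifying $d_\ell^{2}-d_\ell d_{\ell-1}+\alpha^{2}d_{\ell-1}^{2}=\alpha^{2\ell}$. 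Writing $d_\ell^{2}-d_\ell d_{\ell-1}=d_\ell(d_\ell-d_{\ell-1})=-\alpha^{2}d_\ell d_{\ell-2}$, the left-hand side becomes $\alpha^{2}(d_{\ell-1}^{2}-d_{\ell-2}d_\ell)$, which equals $\alpha^{2\ell}$ by Part~\ref{dfactprod} with $k=n=\ell-1$. The only real obstacle is the bookkeeping: choosing the right midpoint $k$ in Part~\ref{dfactrec2} for each parity so the resulting products are symmetric, and recognizing that the stray term $-2\alpha^{n}$ is precisely what Part~\ref{dfactprod} converts into the combinations of $d$'s needed for telescoping.
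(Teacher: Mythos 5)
Your proof is correct, but it takes a genuinely different route from the paper's. You argue directly: you split $d_{2\ell-1}$, $d_{2\ell-2}$ (resp.\ $d_{2\ell}$, $d_{2\ell-1}$) at the midpoint via the halving identity of Theorem~\ref{thm:alldfacts} Part~\ref{dfactrec2}, convert the stray power $\alpha^{2\ell-2}=d_{\ell-1}^2-d_{\ell-2}d_\ell$ via Part~\ref{dfactprod} with $k=n=\ell-1$, and collapse everything with the basic recursion; each parity case is then a short, self-contained algebraic verification, and the key reduction in the odd case, $d_\ell^2-d_\ell d_{\ell-1}+\alpha^2 d_{\ell-1}^2=\alpha^{2\ell}$, is exactly the identity the paper itself establishes inside its proof of Part~\ref{dfactratio}. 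The paper instead first derives a three-term recursion $D_n=D_{n-1}-\alpha^2D_{n-2}+2\alpha^{n-1}(1-2\alpha)$, checks $D_2$ and $D_3$, and runs a simultaneous induction over the two parity classes, invoking Part~\ref{dfactprod} at the end of each inductive step. Your version buys brevity and avoids carrying two coupled induction hypotheses; the paper's version buys the structural observation that $D_n$ obeys (up to a correction term) the same recursion as $d_n$, which is reusable elsewhere. One small bookkeeping point: your applications of Part~\ref{dfactrec2} involve $d_{\ell-2}$, so as written they require $\ell\geq 2$; the cases $\ell=1$ (i.e.\ $D_2$ and $D_3$) should be checked directly, just as the paper does with its base cases.
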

\begin{proof}
First, we observe that 
\begin{align}\begin{split}\label{Dn1}
D_n(\alpha) &=d_{n-1}(\alpha) - 2\alpha^n -2\alpha^2 d_{n-2}(\alpha)\\
            &=d_{n-2}(\alpha)-\alpha^2 d_{n-3}(\alpha) - 2\alpha^n 
            -2\alpha^2 d_{n-3}(\alpha)+2\alpha^4 d_{n-4}(\alpha)\\
            &=d_{n-2}(\alpha) -2\alpha^{n-1} -2\alpha^2 d_{n-3}(\alpha)  
            -\alpha^2 (d_{n-3}(\alpha)-2\alpha^{n-2}-2\alpha^2 d_{n-4}(\alpha))- 4\alpha^n +2\alpha^{n-1}\\
            &=D_{n-1}(\alpha)  
            -\alpha^2 D_{n-2}(\alpha)+2\alpha^{n-1} (1- 2\alpha).
            \end{split}
\end{align}
We note that,
\[
D_2 = 1 - 2\alpha^2 -2\alpha^2 = (1-4\alpha^2) \cdot 1^2
\]
and
\[
D_3(\alpha)=1-\alpha^2 - 2\alpha^3 -2\alpha^2 = (1-2\alpha)(1+2\alpha+\alpha^2).
\]
Assume that $$D_{2\ell-2} =(1-4\alpha^2)d_{\ell-2}^2  \quad \text{and}\quad  D_{2\ell-1} = (1-2\alpha)(\alpha^{2\ell-2} + (1+2\alpha)d_{\ell-1}d_{\ell-2}).$$
Then, 
\begin{align*}
    D_{2\ell}(\alpha)&=D_{2\ell-1}(\alpha)
    -\alpha^2 D_{2\ell-2}(\alpha) + 2\alpha^{2\ell-1}(1-2\alpha)\\    
    &=(1-2\alpha)(\alpha^{2\ell-2} + (1+2\alpha)d_{\ell-1}d_{\ell-2})
    -\alpha^2 (1-4\alpha^2)d_{\ell-2}^2 + 2\alpha^{2\ell-1}(1-2\alpha)\\
    &=(1-2\alpha)\big(\alpha^{2\ell-2} + (1+2\alpha)d_{\ell-1}d_{\ell-2}
    -\alpha^2 (1+2\alpha)d_{\ell-2}^2 + 2\alpha^{2\ell-1}\big)\\
    &=(1-4\alpha^2)\big(\alpha^{2\ell-2} + (d_{\ell-1}d_{\ell-2}
    -\alpha^2 d_{\ell-2}^2) \big)=(1-4\alpha^2)\big(\alpha^{2\ell-2} + d_{\ell}d_{\ell-2}
    \big)
\end{align*}
By Theorem~\ref{thm:alldfacts} Part~\ref{dfactprod} (with $n = k = \ell-1$), $D_{2\ell}(\alpha) =(1-4\alpha^2)(d_{\ell-1}^2)  $. 

\noindent Next, assume that $$ D_{2\ell-1} = (1-2\alpha)(\alpha^{2\ell-2} + (1+2\alpha)d_{\ell-1}d_{\ell-2}) \quad \text{and}\quad D_{2\ell} =(1-4\alpha^2)d_{\ell-1}^2   $$
Then, by \eqref{Dn1}
\begin{align*}
    D_{2\ell+1}(\alpha)&=D_{2\ell} 
    -\alpha^2 D_{{2\ell}-1}+2\alpha^{2\ell}(1 - 2\alpha)\\
    &=(1-4\alpha^2)d_{\ell-1}^2 
    -\alpha^2 (1-2\alpha)(\alpha^{2\ell-2} + (1+2\alpha)d_{\ell-1}d_{\ell-2})+2\alpha^{2\ell}(1 - 2\alpha)\\
    &=(1 - 2\alpha)\big((1+2\alpha)d_{\ell-1}^2 
    -\alpha^2 (\alpha^{2\ell-2} + (1+2\alpha)d_{\ell-1}d_{\ell-2})+2\alpha^{2\ell}\big)\\
    &=(1 - 2\alpha)\big(\alpha^{2\ell}+ (1+2\alpha)(d_{\ell-1}^2 
    -\alpha^2 d_{\ell-1}d_{\ell-2})\big)=(1 - 2\alpha)\big(\alpha^{2\ell}+ (1+2\alpha) d_{\ell}d_{\ell-1}\big)\\
\end{align*}
\end{proof}

\begin{cycleconvth}
$K_{C_n}(\alpha)_{i,j}$ converges as $n$ goes to infinity.

\end{cycleconvth}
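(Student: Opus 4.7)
The plan is to start from the closed-form expression in Theorem~\ref{thm:cycle-katz} and argue convergence term-by-term using the ratio asymptotics in Theorem~\ref{thm:alldfacts}. Fix vertices $i \leq j$, so that for all $n$ large enough we have $j - i \leq n - j + i$; then $k = j - i$ is constant in $n$, and
\[
K_{C_n}(\alpha)_{i,j} + \mathbf{1}_{ij} = \frac{\alpha^{j-i} d_{n-(j-i)-1}(\alpha) + \alpha^{n-(j-i)} d_{j-i-1}(\alpha)}{d_{n-1}(\alpha) - 2\alpha^n - 2\alpha^2 d_{n-2}(\alpha)}.
\]

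The key step is to divide both numerator and denominator by $d_{n-1}(\alpha)$ and pass to the limit. Setting $\beta = (1+\sqrt{1-4\alpha^2})/2$, Theorem~\ref{thm:alldfacts} Part~\ref{dfactratio} gives $d_{n-m}(\alpha)/d_{n-1}(\alpha) \to \beta^{-(m-1)}$ for every fixed $m \geq 1$, while Part~\ref{dfactratioalpha} gives $\alpha^n/d_{n-1}(\alpha) \to 0$. Applying these, the second summand of the numerator and the $2\alpha^n$ term in the denominator both drop out, and the limit collapses to
\[
\lim_{n \to \infty} K_{C_n}(\alpha)_{i,j} = \frac{\alpha^{j-i} \beta^{-(j-i)}}{1 - 2\alpha^2/\beta} - \mathbf{1}_{ij}.
\]
To produce a tidy closed form, I would then simplify the denominator using the fact that $\beta$ satisfies $\beta^2 - \beta + \alpha^2 = 0$, which gives $\alpha^2/\beta = 1 - \beta$ and hence $1 - 2\alpha^2/\beta = 2\beta - 1 = \sqrt{1-4\alpha^2}$.

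I do not expect a genuine obstacle here. The admissible range $0 < \alpha < 1/\rho(A_{C_n}) = 1/2$ is uniform in $n \geq 3$ and lies inside $(0, 1/2)$, so Theorem~\ref{thm:alldfacts} Parts~\ref{dfactratio} and \ref{dfactratioalpha} apply with all quantities real and positive. The only point that merits a sentence is verifying that $\alpha^{n-(j-i)} d_{j-i-1}(\alpha)/d_{n-1}(\alpha) \to 0$, which reduces to Part~\ref{dfactratioalpha} because $d_{j-i-1}(\alpha)$ is a fixed constant independent of $n$. An alternative route via Lemma~\ref{lem:katD}, which gives $D_n(\alpha)$ in factored form, works but splits into parity cases of $n$ and is less uniform; the direct ratio approach above avoids that complication.
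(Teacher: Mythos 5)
Your argument is correct and reaches the conclusion by a genuinely different route from the paper. The paper first factors the denominator $D_n(\alpha)=d_{n-1}-2\alpha^n-2\alpha^2 d_{n-2}$ via Lemma~\ref{lem:katD}, which forces a case split on the parities of $n$ and $|j-i|$, and then applies Theorem~\ref{thm:alldfacts} Parts~\ref{dfactratio} and~\ref{dfactratioalpha} inside each case (together with Part~\ref{dfactrec2} to rewrite $d_{2\ell-2m}$ in terms of $d_{\ell-m}$), checking at the end that the even and odd limits coincide. You instead divide the numerator and denominator of the formula from Theorem~\ref{thm:cycle-katz} by $d_{n-1}(\alpha)$ and pass to the limit directly: with $\beta=(1+\sqrt{1-4\alpha^2})/2$ the denominator tends to $1-2\alpha^2/\beta=2\beta-1=\sqrt{1-4\alpha^2}>0$ (so the division and the quotient of limits are legitimate for all admissible $\alpha<1/2$) and the numerator tends to $(\alpha/\beta)^{k}$, giving the single closed form $(\alpha/\beta)^{k}/\sqrt{1-4\alpha^2}$ uniformly in parity. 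This is cleaner: it needs only Parts~\ref{dfactratio} and~\ref{dfactratioalpha}, and your formula agrees with the paper's even case after simplifying $1-\alpha^4u^4=\sqrt{1-4\alpha^2}/\beta^2$ with $u=1/\beta$ (it also matches the classical generating function for walks on $\mathbb{Z}$ at distance $k$, and suggests that the exponents $2m-1$ and $2m-3$ in the paper's displayed odd case for $|j-i|=2m+1$ carry an index shift). What the paper's route buys is the explicit factorization of $D_n$, which it reuses; for the convergence statement alone your argument is shorter and complete, modulo the one-line remark you already make that $k=\min(j-i,\,n-j+i)=j-i$ once $n\ge 2(j-i)$, and the observation that the diagonal case $i=j$ is covered separately by $\det(M_{C_n}(1,1))=d_{n-1}(\alpha)$.
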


\begin{proof}
We show that as $n \to \infty$, $K_{C_n}(\alpha)_{i,j}$ converges to a function of $\alpha$ and $|j-i|$.  We begin by noting that without loss of generality due to the symmetry of the problem, we may assume that $i = 1$ and consider the equivalent problem for $K_{C_n}(\alpha)_{1,j-i+1}$.  We must consider 4 cases, depending on whether $n$ and $|j-i|$ are even or odd.  We detail the case of $|j-i|$ even and provide the limiting functions for the other cases as the proofs are very similar.  We write $d_{k}$ instead of $d_k(\alpha)$ for readability.

Let $n=2\ell$ and $|j-i| = 2m$, then by Theorem~\ref{thm:cycle-katz}, Theorem~\ref{thm:alldfacts} part~\ref{dfactrec2} and Lemma~\ref{lem:katD} we have
\begin{align*}
    K_{C_{}2\ell}(\alpha)_{1,1+2m} &=\frac{\alpha^{2m} d_{2\ell - 2m-1} +\alpha^{2\ell-2m}d_{2m-1}}{(1-4\alpha^2)d^2_{\ell -1}}\\
&=\frac{\alpha^{2m}}{1-4\alpha^2}\left[\frac{d^2_{\ell-m} - \alpha^4d_{\ell-m-2}^2}{d^2_{\ell-1}} + \frac{d_{2m-1}}{\alpha^{4m-2}}\left(\frac{\alpha^{\ell-1}}{d_{\ell-1}}\right)^2\right]\\
&=\frac{\alpha^{2m}}{1-4\alpha^2}\left[\left(\frac{d_{\ell-m}}{d_{\ell-1}}\right)^2 - \alpha^4\left(\frac{d_{\ell-m-2}}{d_{\ell-1}}\right)^2 + \frac{d_{2m-1}}{\alpha^{4m-2}}\left(\frac{\alpha^{\ell-1}}{d_{\ell-1}}\right)^2\right].\\
\end{align*}
By Theorem~\ref{thm:alldfacts} parts~\ref{dfactratio} and~\ref{dfactratioalpha}, taking the limit as $n\to \infty$ (i.e., $\ell\to \infty$)
of the prior line yields
\begin{align*}\lim_{n\to\infty} K_{C_{2\ell}}(\alpha)_{1,1+2m}&=\frac{\alpha^{2m}}{1-4\alpha^2}\left[\left(\frac{2}{1+\sqrt{1-4\alpha^2}}\right)^{2m-2}-\alpha^4\left(\frac{2}{1+\sqrt{1-4\alpha^2}}\right)^{2m+2}\right]\\
&=\frac{\alpha^{2m}}{1-4\alpha^2}\left(\frac{2}{1+\sqrt{1-4\alpha^2}}\right)^{2m-2}\left[1-\alpha^4\left(\frac{2}{1+\sqrt{1-4\alpha^2}}\right)^{4}\right]\\
\end{align*}
Now assume $n=2\ell+1$ and $|j-i| = 2m$, then by Theorem~\ref{thm:cycle-katz} and Lemma~\ref{lem:katD} we have
\begin{align*}
    K_{C_{2\ell+1}}(\alpha)_{1,1+2m} &=\frac{\alpha^{2m} d_{2\ell - 2m} +\alpha^{2\ell-2m+1}d_{2m-1}}{(1-2\alpha)(\alpha^{2\ell}+(1+2\alpha)d_{\ell}d_{\ell -1})}\\
    &=\frac{\alpha^{2m}}{1-2\alpha}\left(\frac{d_{2\ell-2m+1}}{\alpha^{2\ell}+(1+2\alpha)d_{\ell}d_{\ell -1}}+\frac{\alpha^{2\ell-4m+1}d_{2m-1}}{\alpha^{2\ell}+(1+2\alpha)d_{\ell}d_{\ell -1}}\right).
   \end{align*}
Consider
 \begin{align*}
 A_\ell &=\frac{d_{2\ell-2m}}{\alpha^{2\ell}+(1+2\alpha)d_{\ell}d_{\ell -1}}, \quad B_{\ell} = \frac{\alpha^{2\ell-4m+1}d_{2m-1}}{\alpha^{2\ell}+(1+2\alpha)d_{\ell}d_{\ell -1}}.
 \end{align*}
 Note that
 \begin{align*}
 |B_\ell | <\frac{\alpha^{2\ell-4m+1}d_{2m-1}}{ (1+2\alpha)d_{\ell}d_{\ell-1}}
 = \frac{d_{2m-1}}{\alpha^{4m-2}(1+2\alpha)}\frac{\alpha^{2\ell-1}}{ d_{\ell}d_{\ell-1}} \to 0
 \end{align*}

 On the other hand, by Theorem~\ref{thm:alldfacts} part~\ref{dfactratio}
  \begin{align*}
\lim_{n\to \infty} \frac{d_{2l-2m}}{d_{\ell}d_{\ell-1}}&=\lim_{n\to \infty}\frac{d_{\ell-m}^2-\alpha^2d^2_{\ell-m-1}}{d_{\ell}d_{\ell-1}}\\
 &=\lim_{n\to \infty}\left(\frac{d_{\ell-m}}{d_{\ell-1}}\right)^2\left(\frac{d_{\ell-1}}{d_{\ell}}\right) -\alpha^2\left(\frac{d_{\ell-m-1}}{d_{\ell-1}}\right)^2\left(\frac{d_{\ell-1}}{d_{\ell}}\right)\\
 &=\left[\left(\frac{2}{1+\sqrt{1-4\alpha^2}}\right)^{2m-1}-\alpha^2\left(\frac{2}{1+\sqrt{1-4\alpha^2}}\right)^{2m+1}\right].
\end{align*}
 So, 
 \begin{align*}
 \lim_{n\to \infty}\frac{1}{A_{\ell}}&= \lim_{n\to\infty}\left[\frac{\alpha^{2\ell}}{d_{2\ell-2m}}+(1+2\alpha)\frac{d_{\ell}d_{\ell-1}}{d_{2\ell-2m}}\right]\\&=(1+2\alpha)\left[\left(\frac{2}{1+\sqrt{1-4\alpha^2}}\right)^{2m-1}-\alpha^2\left(\frac{2}{1+\sqrt{1-4\alpha^2}}\right)^{2m+1}\right]^{-1}
 \end{align*}

Thus,
\begin{align*}\lim_{n\to\infty} K_{C_{2\ell+1}}(\alpha)_{1,1+2m}&=\frac{\alpha^{2m}}{1-4\alpha^2}\left[\left(\frac{2}{1+\sqrt{1-4\alpha^2}}\right)^{2m-1}-\alpha^2\left(\frac{2}{1+\sqrt{1-4\alpha^2}}\right)^{2m+1}\right]\\
&=\frac{\alpha^{2m}}{1-4\alpha^2}\left(\frac{2}{1+\sqrt{1-4\alpha^2}}\right)^{2m-2}\left[\frac{2}{1+\sqrt{1-4\alpha^2}}-\alpha^2\left(\frac{2}{1+\sqrt{1-4\alpha^2}}\right)^{3}\right]
\end{align*}
It is an easy exercise to show that 
\[
1-\alpha^4\left(\frac{2}{1+\sqrt{1-4\alpha^2}}\right)^{4}
=\frac{2}{1+\sqrt{1-4\alpha^2}}-\alpha^2\left(\frac{2}{1+\sqrt{1-4\alpha^2}}\right)^{3}.
\]
The proof proceeds similarly for the case that $|j-i|$ is odd, yielding \\[2mm]
\begin{equation*}
\lim_{n\to\infty} K_{C_n}(\alpha)_{i,j}=
\begin{cases}
\displaystyle \frac{\alpha^{2m}}{1-4\alpha^2}\left(\frac{2}{1+\sqrt{1-4\alpha^2}}\right)^{2m-2}\left[1-\alpha^4\left(\frac{2}{1+\sqrt{1-4\alpha^2}}\right)^{4}\right],& \text{if $|j-i| = 2m$}\\[5mm]
\displaystyle  \frac{\alpha^{2m-1}}{1-4\alpha^2}\left(\frac{2}{1+\sqrt{1-4\alpha^2}}\right)^{2m-3}\left[1-\alpha^4\left(\frac{2}{1+\sqrt{1-4\alpha^2}}\right)^{4}\right],&  \text{if $|j-i| = 2m+1$}.\\
\end{cases}
\end{equation*}

\end{proof}
     
\section{Relationship between Distance, Resistance Distance, and Katz similarity index}\label{sec:relationship}

 It is a natural open problem to determine the values of $\alpha$ that will result in the same predicted link ordering for the Katz similarity index and the resistance distance in various families of graphs. Along these lines,  we present Theorems~\ref{thm:cycleordering} and \ref{conj:Katzordering}.  Unlike in prior sections, we start with the cycle graph and then move to the path graph.   

\subsection{Cycles}
In cycle graphs all distance $k$ pairs of vertices give the same Katz similarity index and resistance distance. Thus, our result relating the three distances can be stated without restriction to location on the cycle.  
\begin{ktzcycle}
Let $C_n$ be a cycle graph on $n$ vertices, with associated adjacency matrix $A_n$. For every valid value of $\alpha$ (i.e., $0<\alpha < 1/\rho(A_n)$)  the effective resistance, distance, and Katz similarity rank all node pairs of $G$ in the same order.  That is, 
\[
K_{C_n}(\alpha)_{i,j} < K_{C_n}(\alpha)_{i',j'}
\Leftrightarrow
R_{C_n}(i,j)>R_{C_n}(i',j') 
\Leftrightarrow
d_{C_n}(i,j)>d_{C_n}(i',j')
\]
for all $i,i',j,j' \in [n]$.

\end{ktzcycle}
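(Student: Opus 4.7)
The approach is to reduce all three quantities to strictly monotone functions of the graph distance $k := d_{C_n}(i,j) \in \{1,\ldots,\lfloor n/2\rfloor\}$; once this is done, the three orderings of distinct vertex pairs must agree. By the rotational symmetry of $C_n$, both the graph distance and, viewing the cycle as arcs of lengths $k$ and $n-k$ connected in parallel, the resistance distance $R_{C_n}(i,j) = k(n-k)/n$ depend only on $k$, and both are strictly increasing in $k$ on this range. By Theorem~\ref{thm:cycle-katz},
\[
K_{C_n}(\alpha)_{i,j} \;=\; \frac{f(k)}{D_n(\alpha)}, \qquad f(k) := \alpha^k\,d_{n-k-1}(\alpha) + \alpha^{n-k}\,d_{k-1}(\alpha),
\]
and Lemma~\ref{lem:katD} shows that $D_n(\alpha) > 0$ for every admissible $\alpha \in (0,1/2)$. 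Hence it suffices to prove $f(k) > f(k+1)$ for each $1 \le k \le \lfloor n/2 \rfloor - 1$.

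The algebraic heart of the argument is the following. Set $\tilde d_m := d_m(\alpha) - \alpha d_{m-1}(\alpha)$; a direct rearrangement gives
\[
f(k) - f(k+1) \;=\; \alpha^k\,\tilde d_{n-k-1} - \alpha^{n-k-1}\,\tilde d_k \;=\; \alpha^k\bigl(\tilde d_{n-k-1} - \alpha^{\,n-2k-1}\,\tilde d_k\bigr).
\]
Applying the recursion $d_m = d_{m-1} - \alpha^2 d_{m-2}$ of Theorem~\ref{thm:alldfacts}, part~\ref{dfactrecursion}, one obtains the clean identity
\[
\tilde d_m - \alpha\,\tilde d_{m-1} \;=\; (1 - 2\alpha)\,d_{m-1}(\alpha),
\]
which is strictly positive whenever $\alpha < 1/2 = 1/\rho(A_{C_n})$, by Theorem~\ref{thm:alldfacts}, part~\ref{dfactboundseasy}. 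Starting from $\tilde d_1 = 1-\alpha > 0$ and iterating the bound $\tilde d_m > \alpha\,\tilde d_{m-1}$ exactly $n-2k-1 \ge 1$ times, this yields $\tilde d_{n-k-1} > \alpha^{n-2k-1}\,\tilde d_k$, and hence $f(k) > f(k+1)$.

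Combining these pieces, each of $d_{C_n}$, $R_{C_n}$, and $-K_{C_n}(\alpha)$ is a strictly increasing function of the single parameter $k$ on $\{1,\ldots,\lfloor n/2\rfloor\}$, so for any two distinct pairs with distances $k$ and $k'$ the three inequalities asserted in the theorem all reduce to $k > k'$. The main obstacle I anticipate is that $f(k)$ is genuinely a sum of two competing terms: the leading $\alpha^k d_{n-k-1}$ decreases in $k$ while the correction $\alpha^{n-k} d_{k-1}$ increases, so monotonicity is not visible directly from the formula. The resolution is the one-line identity above, which precisely encodes that the recursion rate of $d_m(\alpha)$ beats $\alpha$ exactly when $\alpha < 1/\rho(A_{C_n})$; this also explains why, in contrast with the path case, no $\alpha$-cutoff arises for cycles.
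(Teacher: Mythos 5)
Your proof is correct, and it takes a genuinely different route from the paper's. The paper also reduces everything to showing that the numerator $f(k)=\alpha^k d_{n-k-1}(\alpha)+\alpha^{n-k}d_{k-1}(\alpha)$ of the expression in Theorem~\ref{thm:cycle-katz} is strictly decreasing in $k$, but it does so by a \emph{backward} induction on $k$: it first verifies the base case $\Delta_{n,k}>0$ at the middle of the cycle ($k=\tfrac n2-1$, computed only for $n$ even), and then shows $\Delta_{n,k}>\Delta_{n,k+1}$ so that positivity propagates down to $k=1$. You instead factor the difference directly, $f(k)-f(k+1)=\alpha^k\bigl(\tilde d_{n-k-1}-\alpha^{\,n-2k-1}\tilde d_k\bigr)$ with $\tilde d_m=d_m(\alpha)-\alpha d_{m-1}(\alpha)$, and close the argument with the identity $\tilde d_m-\alpha\tilde d_{m-1}=(1-2\alpha)d_{m-1}(\alpha)$, which follows from Theorem~\ref{thm:alldfacts}, Part~\ref{dfactrecursion}, and is positive by Part~\ref{dfactboundseasy}. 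Your version buys several things: it handles $n$ even and odd uniformly (avoiding the paper's even-only base case), it makes explicit that the denominator $D_n(\alpha)$ is positive via Lemma~\ref{lem:katD} (a step the paper leaves implicit when it passes to numerators), and it isolates exactly where the hypothesis $\alpha<1/2=1/\rho(A_{C_n})$ is used, which nicely explains why no cut-off value appears for cycles. Two small points worth making explicit in a final write-up: the chain $\tilde d_{n-k-1}>\alpha\tilde d_{n-k-2}>\cdots>\alpha^{n-2k-1}\tilde d_k$ is legitimate because each inequality is multiplied through by a positive power of $\alpha$ (no sign information about the $\tilde d_m$ is needed, though in fact $\tilde d_m>0$ by Part~\ref{dfactboundseasy}); and the identity for $\tilde d_m-\alpha\tilde d_{m-1}$ uses the recursion at index $m$, so one should note that the smallest index appearing in the chain is $m=k+1\geq 2$, where the recursion is valid.
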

\begin{proof}
Without loss of generality assume that $j > i$ and that $k = j-i < \lfloor{n/2}\rfloor.$
Note first that for  $K_{C_n}(i,i+k) = K_{C_n}(1,1+k)$,  $d_{C_n}(i,i + k)=d_{C_n}(1,1 + k)$, and  $R_{C_n}(i,i+k)=R_{C_n}(1,1+k)$ for any $i$ and $k$. Thus, without loss of generality, we can proceed by assuming that $i=i'=1$. 
For $n$ odd and  $k = \lfloor n/2 \rfloor$, $n-k = k+1$, so $K_{C_n}(1,1+k) =K_{C_n}(1,2+k) $, $d_{C_n}(1,1 + k)=d_{C_n}(1,2 + k)$, and $R_{C_n}(1,1+k)=R_{C_n}(1,2+k)$.

It is straightforward to verify that  $d_{C_n}(1,1 + k)<d_{C_n}(1,1+k+1)$ if and only if $R_{C_n}(1,1+k)<R_{C_n}(1,1+k+1)$ if and only if $k < \lfloor \frac n2 \rfloor$.  For the last equivalence ($K_{C_n}(1,1+k)>K_{C_n}(1,1+k+1)$), we consider the difference of the associated numerators (as defined in Theorem~\ref{thm:cycle-katz}:

    \begin{equation*}
        \Delta_{n,k}=\alpha^kd_{n-k-1}(\alpha)+\alpha^{n-k}d_{k-1}(\alpha)-\alpha^{k+1}d_{n-k-2}(\alpha)-\alpha^{n-k-1}d_{k}(\alpha),
    \end{equation*}
    First we show that for $n$ even and $k = \frac n 2 -1$, $\Delta_{n,k}>0$. 
    Set $\ell = \frac n2$ and $k = \ell-1$:
    \begin{align*}
        \Delta_{n,k}&= \alpha^{\ell-1}d_{\ell}(\alpha)+\alpha^{\ell+1}d_{\ell-2}(\alpha)-\alpha^{\ell}d_{\ell-1}(\alpha)-\alpha^{\ell}d_{\ell-1}(\alpha)\\
        &= \alpha^{\ell-1}[d_{\ell-1}-\alpha^2d_{\ell-2}]+\alpha^{\ell+1}d_{\ell-2}(\alpha)-2\alpha^{\ell}d_{\ell-1}(\alpha)\\
        &=\alpha^{\ell-1}d_{\ell-1}(1-2\alpha^2)>0.
     \end{align*}
 Next, we induct (backwards!) on $k$. 
 Note that for $1\leq k < \lfloor \frac n 2\rfloor$
    \begin{align*}
        \Delta_{n,k}&=\alpha^kd_{n-k-2}(\alpha)-\alpha^{k+2}d_{n-k-3}(\alpha) -\alpha^{k+1}d_{n-k-2}(\alpha)\\
        &\qquad +\alpha^{n-k-2}(d_{k}(\alpha)-d_{k+1}(\alpha))-\alpha^{n-k-1}d_{k}(\alpha)\\
        &=\alpha^kd_{n-k-2}(\alpha)(1-\alpha)-\alpha^{k+2}d_{n-k-3}(\alpha)\\
        &\qquad +\alpha^{n-k-2}d_{k}(\alpha)(1-\alpha)-\alpha^{n-k-2}d_{k}(\alpha)d_{k+1}(\alpha)\\
        &>\alpha^{k+1}d_{n-k-2}(\alpha)-\alpha^{k+2}d_{n-k-3}(\alpha)\\
        &\qquad +\alpha^{n-k-1}d_{k}(\alpha)-\alpha^{n-k-2}d_{k}(\alpha)d_{k+1}(\alpha)\\
        &= \Delta_{n,k+1}.
    \end{align*}
Thus, whenever $\Delta_{n,k}>0$ and $k>1$, then $\Delta_{n,k-}>0$.

    \end{proof}

\subsection{Paths}
In this section, our aim is to prove Theorem~\ref{conj:Katzordering}. A useful first step is to identify distance-$k$  pairs of vertices  that have the smallest and largest Katz similarity index.  

\begin{prop}\label{prop:monotone}
For $\alpha \in (0,.5)$, $K_{P_n}(\alpha)_{i,i+k} \leq K_{P_n}(\alpha)_{i+1,i+k+1}$ where $1\leq i,k\leq n$ are positive integers and $0 \leq n - k - 2i -1 $. 

\end{prop}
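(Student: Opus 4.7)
The plan is to reduce this monotonicity statement to a direct application of Theorem~\ref{thm:alldfacts} Part~\ref{dfactprod}. First I would invoke the closed form for the Katz similarity index from Theorem~\ref{thm:Katzdist}. Both $K_{P_n}(\alpha)_{i,i+k}$ and $K_{P_n}(\alpha)_{i+1,i+k+1}$ share the positive common denominator $d_n(\alpha)$ (positivity by Part~\ref{dfactboundseasy}), and they share the common factor $\alpha^k$ since the two vertex pairs lie at the same graph distance $k$. So after clearing these, the claimed inequality $K_{P_n}(\alpha)_{i,i+k} \leq K_{P_n}(\alpha)_{i+1,i+k+1}$ is equivalent to
\[
d_{i-1}(\alpha)\, d_{n-i-k}(\alpha) \;\leq\; d_i(\alpha)\, d_{n-i-k-1}(\alpha).
\]

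Next I would recognize this as precisely the shape of the identity in Theorem~\ref{thm:alldfacts} Part~\ref{dfactprod}, namely $d_k d_n - d_{k-1} d_{n+1} = \alpha^{2k} d_{n-k}$. Substituting the dummy variables $k\mapsto i$ and $n\mapsto n-i-k-1$ gives
\[
d_i(\alpha)\, d_{n-i-k-1}(\alpha) - d_{i-1}(\alpha)\, d_{n-i-k}(\alpha) \;=\; \alpha^{2i}\, d_{n-2i-k-1}(\alpha).
\]
It remains only to verify that the right-hand side is nonnegative. This is where the hypothesis $n - k - 2i - 1 \geq 0$ earns its keep: it simultaneously guarantees that the subscript $n-2i-k-1$ is a legitimate nonnegative index for $d$ and that the invocation of Part~\ref{dfactprod} is legal (which requires $n-i-k-1 \geq i$, i.e., exactly $n-k-2i-1 \geq 0$). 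Positivity of $d_{n-2i-k-1}(\alpha)$ for $\alpha \in (0, 0.5)$ then follows from Part~\ref{dfactboundseasy}, with the boundary case $n-2i-k-1 = 0$ handled by the initial value $d_0 = 1$. The factor $\alpha^{2i}>0$ is clear since $\alpha \in (0,0.5)$ and $i \geq 1$.

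The only real issue is index bookkeeping — lining up the two indices of $d$ appearing in $K_{P_n}(\alpha)_{i,i+k}$ against the template $d_k d_n - d_{k-1} d_{n+1}$ and confirming that the geometric hypothesis $0 \leq n-k-2i-1$ is exactly what Part~\ref{dfactprod} requires to output a nonnegative-index $d$. There is no genuine obstacle beyond this alignment. Morally, the proposition is saying that, holding the distance $k$ between two vertices fixed, the Katz similarity index grows as the pair slides toward the center of the path, and this is expressed cleanly by the log-concavity-type relation $d_i(\alpha) d_{b-1}(\alpha) \geq d_{i-1}(\alpha) d_b(\alpha)$ for $b \geq i+1$ that Part~\ref{dfactprod} already encodes.
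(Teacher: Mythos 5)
Your proposal is correct and follows essentially the same route as the paper: apply the closed form from Theorem~\ref{thm:Katzdist}, cancel the common factor $\alpha^k/d_n(\alpha)$, and identify the resulting difference $d_i d_{n-i-k-1} - d_{i-1} d_{n-i-k}$ with $\alpha^{2i} d_{n-2i-k-1}$ via Theorem~\ref{thm:alldfacts} Part~\ref{dfactprod}, with positivity from Part~\ref{dfactboundseasy}. Your additional remarks on why the hypothesis $0 \leq n-k-2i-1$ is exactly what legitimizes the invocation of Part~\ref{dfactprod} are accurate and slightly more explicit than the paper's version.
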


\begin{proof}
Recall from Theorem~\ref{thm:Katzdist} that 
\[
K_{P_n}(\alpha)_{i+1,i+k+1}-K_{P_n}(\alpha)_{i,i+k} =
\frac{\alpha^{k}(d_i(\alpha)d_{n-k-i-1}(\alpha)-d_{i-1}(\alpha)d_{n-k-i}(\alpha))}{d_n(\alpha)}.
\]
From Theorem~\ref{thm:alldfacts} Parts~\ref{dfactprod} and \ref{dfactboundseasy} we  have that
\[
d_i(\alpha)d_{n-k-i-1}(\alpha)-d_{i-1}(\alpha)d_{n-k-i}(\alpha) = \alpha^{2i}d_{n-k-2i-1}(\alpha)>0.
\]

\end{proof}

Consider the polynomial
\[
{}_np_j(\alpha) = \min_i K_{P_n}(\alpha)_{i,i+j} - \max_i K_{P_n}(\alpha)_{i,i+j+1} 
\]
Note that any values of $\alpha$ for which ${}_np_j(\alpha)$ is positive correspond to values of $\alpha$ for which the Katz similarity index is lower for all distance-$j$ pairs of vertices than for any distance-$(j+1)$ pairs. 
Proposition~\ref{prop:monotone} implies that $\min_i K_{P_n}(\alpha)_{i,i+j}$ occurs at $i=1$, and $ \max_i K_{P_n}(\alpha)_{i,i+j+1}$ occurs at $i=m = \lceil \frac{n-j} 2 \rceil$, thus 

\[
{}_np_j(\alpha)
=  K_{P_n}(\alpha)_{1,j+1} - K_{P_n}(\alpha)_{m,m+j+1}.
\]

Theorem~\ref{thm:Katzdist} and Theorem \ref{thm:alldfacts} Part \ref{dfactboundseasy} give the following fact, which reduces the proof of Theorem~\ref{conj:Katzordering} to a simpler case. 
\begin{prop}\label{prop:reduceThm2}
    Let 
    ${}_n\tilde p_j(\alpha) = d_{n-j-1}(\alpha)-\alpha d_{m-1}(\alpha)d_{n-m-j-1}(\alpha)$.
    Then, 
    \[
    {}_n\tilde p_j(\alpha) =0 \iff {}_n p_j(\alpha) =0 \quad \text{and} \quad 
    {}_n\tilde p_j(\alpha) >0 \iff {}_n p_j(\alpha) >0.
    \]
\end{prop}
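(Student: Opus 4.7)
The plan is to substitute the closed-form expression from Theorem~\ref{thm:Katzdist} directly into the formula
\[
{}_np_j(\alpha) = K_{P_n}(\alpha)_{1,j+1} - K_{P_n}(\alpha)_{m,m+j+1}
\]
(which already uses Proposition~\ref{prop:monotone} to identify where the minimum and maximum are attained) and then factor out a positive common factor.

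First I would compute each term. Applying Theorem~\ref{thm:Katzdist} with $(i,j)=(1,j+1)$, and using $d_0(\alpha)=1$, gives
\[
K_{P_n}(\alpha)_{1,j+1} = \frac{\alpha^{j}\,d_{n-j-1}(\alpha)}{d_n(\alpha)}.
\]
Applying the same theorem with $(i,j)=(m,m+j+1)$ yields
\[
K_{P_n}(\alpha)_{m,m+j+1} = \frac{\alpha^{j+1}\,d_{m-1}(\alpha)\,d_{n-m-j-1}(\alpha)}{d_n(\alpha)}.
\]
Subtracting and pulling out the common factor $\alpha^j/d_n(\alpha)$,
\[
{}_np_j(\alpha) = \frac{\alpha^{j}}{d_n(\alpha)}\Bigl[d_{n-j-1}(\alpha)-\alpha\,d_{m-1}(\alpha)\,d_{n-m-j-1}(\alpha)\Bigr] = \frac{\alpha^{j}}{d_n(\alpha)}\cdot {}_n\tilde p_j(\alpha).
\]

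The second step is to show that the prefactor $\alpha^j/d_n(\alpha)$ is strictly positive on the range of interest. Since we work with $\alpha\in(0,0.5)$ (the hypothesis of Proposition~\ref{prop:monotone}, to which Proposition~\ref{prop:reduceThm2} is a follow-up), $\alpha^j>0$ trivially, while Theorem~\ref{thm:alldfacts} Part~\ref{dfactboundseasy} guarantees $d_n(\alpha)>0$. Therefore ${}_np_j(\alpha)$ and ${}_n\tilde p_j(\alpha)$ have exactly the same sign and the same zero set, which is the conclusion.

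There is essentially no analytic obstacle here; the only thing to be careful about is bookkeeping the indices produced by Theorem~\ref{thm:Katzdist} (in particular, correctly identifying which endpoint plays the role of $i$ and which plays the role of $j$ in the formula, and verifying that the index $n-m-j-1$ is in range so that $d_{n-m-j-1}(\alpha)$ is well-defined and positive). This index check is exactly what the hypothesis $0\le n-j-2i-1$ in Proposition~\ref{prop:monotone} secures when $i=m$.
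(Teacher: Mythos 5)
Your proposal is correct and is exactly the computation the paper has in mind: the paper states Proposition~\ref{prop:reduceThm2} without written proof, citing only Theorem~\ref{thm:Katzdist} and Theorem~\ref{thm:alldfacts} Part~\ref{dfactboundseasy}, and your factorization ${}_np_j(\alpha) = \bigl(\alpha^j/d_n(\alpha)\bigr)\,{}_n\tilde p_j(\alpha)$ with the positivity of the prefactor is precisely the intended argument.
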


We now recall and prove Theorem~\ref{conj:Katzordering}:
\begin{ktzordering} Let $P_n$ be a path graph on $n$ of vertices. The resistance distance and usual graph distance are equal, thus they always rank all node pairs equivalently.  If $\alpha < 1/{\sqrt{5}}$, then effective resistance (equivalently, distance) and Katz similarity rank all node pairs of $G$ in the same order. That is, 
\[
K_{P_n}(\alpha)_{i,j} < K_{P_n}(\alpha)_{i',j'}
\text{ if and only if }
R_{P_n}(i,j)>R_{P_n}(i',j')
\]
for $\alpha< 1/\sqrt{5}$ and for all $i,i',j,j' \in [n]$.
\end{ktzordering}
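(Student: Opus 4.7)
The plan is to show that for $\alpha\in(0,1/\sqrt{5})$ the Katz similarity index on $P_n$ is a strictly decreasing function of graph distance.  Since effective resistance on a path coincides with graph distance, this yields the desired equivalence.  By Propositions~\ref{prop:monotone} and~\ref{prop:reduceThm2}, it suffices to prove that the polynomial
\[
{}_n\tilde p_j(\alpha) = d_{n-j-1}(\alpha) - \alpha\, d_{m-1}(\alpha)\, d_{n-m-j-1}(\alpha)
\]
is strictly positive for every $1\le j\le n-2$ and every $\alpha\in(0,1/\sqrt{5})$, where $m=\lceil(n-j)/2\rceil$.

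I would then split on the parity of $n-j$.  In the even case $n-j=2m$ one has $n-m-j-1=m-1$; applying Theorem~\ref{thm:alldfacts} Part~\ref{dfactrec2} with $k=m$ together with the basic recursion $d_m=d_{m-1}-\alpha^2 d_{m-2}$ yields
\[
{}_n\tilde p_j(\alpha) = d_{m-1}(\alpha)\bigl[(1-\alpha)\,d_{m-1}(\alpha) - 2\alpha^2 d_{m-2}(\alpha)\bigr].
\]
In the odd case $n-j=2m-1$ one has $n-m-j-1=m-2$, and the identity $d_{2m-2}=d_{m-1}^2-\alpha^2 d_{m-2}^2$ (again from Part~\ref{dfactrec2}) gives
\[
{}_n\tilde p_j(\alpha) = d_{m-1}(\alpha)^2 - \alpha\, d_{m-1}(\alpha)\,d_{m-2}(\alpha) - \alpha^2 d_{m-2}(\alpha)^2.
\]
Writing $\rho_m=d_{m-1}(\alpha)/d_{m-2}(\alpha)$, each expression is positive precisely when $\rho_m$ exceeds a threshold depending on $\alpha$: $\rho_m>2\alpha^2/(1-\alpha)$ in the even case, and $\rho_m>\alpha\Phi$ (the positive root of the quadratic $x^2-\alpha x-\alpha^2$) in the odd case.

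The crucial coincidence is that at $\alpha=1/\sqrt{5}$ both thresholds equal $\Phi/\sqrt{5}$, and each is strictly increasing in $\alpha$; hence for $\alpha<1/\sqrt{5}$ each threshold is strictly less than $\Phi/\sqrt{5}$.  On the other hand, Theorem~\ref{thm:alldfacts} Part~\ref{dfactBF} gives $\rho_m\ge\mathbb{FR}_{m-1}$, and the elementary identity
\[
\mathbb{FR}_n - \frac{\Phi}{\sqrt{5}} = \frac{\varphi^n}{\sqrt{5}\,(\Phi^n-\varphi^n)},
\]
valid for $n\ge 1$ and verified using $\Phi-\varphi=1$, shows that $\mathbb{FR}_{m-1}>\Phi/\sqrt{5}$.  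Combining the inequalities yields $\rho_m>\Phi/\sqrt{5}$, which strictly exceeds either threshold whenever $\alpha<1/\sqrt{5}$, so ${}_n\tilde p_j(\alpha)>0$.

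The only remaining subcase is $m=1$: in the even situation ${}_n\tilde p_j(\alpha)=1-\alpha>0$ directly, while the odd situation is vacuous as $P_n$ has no pairs of vertices at distance $n$.  I expect the main obstacle to be bookkeeping rather than substance.  Verifying the algebraic coincidence that both thresholds meet at $\Phi/\sqrt{5}$ exactly when $\alpha=1/\sqrt{5}$, together with their strict monotonicity, is what pins down $1/\sqrt{5}$ as the correct cut-off; everything else is a clean application of the identities collected in Theorem~\ref{thm:alldfacts}.
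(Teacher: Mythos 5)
Your proposal is correct, and while it shares the paper's overall skeleton (reduce via Propositions~\ref{prop:monotone} and~\ref{prop:reduceThm2} to showing ${}_n\tilde p_j(\alpha)>0$, then invoke Theorem~\ref{thm:alldfacts} Parts~\ref{dfactrec2} and~\ref{dfactBF}), the way you establish the positivity is genuinely different from the paper's. The paper keeps $m$ and $n-j-m$ as separate indices, applies the $\mathbb{FR}$ lower bound to both factors, and then grinds through an expansion of $\mathbb{FR}_{m-1}(\mathbb{FR}_{n-j-m}-\alpha)-\alpha^2$ in powers of $\varphi$ and $1+\varphi$, with the parity of $n-j$ entering only at the last step to collapse $\varphi^{m}/\varphi^{n-j-m}$. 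You instead split on parity at the outset, which collapses ${}_n\tilde p_j$ into a quadratic form in the two consecutive values $d_{m-1},d_{m-2}$ (I checked both factorizations: $d_{m-1}[(1-\alpha)d_{m-1}-2\alpha^2 d_{m-2}]$ in the even case and $d_{m-1}^2-\alpha d_{m-1}d_{m-2}-\alpha^2 d_{m-2}^2$ in the odd case are right, and both thresholds do equal $\Phi/\sqrt5$ at $\alpha=1/\sqrt5$ and increase in $\alpha$). This reduces everything to the single ratio inequality $d_{m-1}/d_{m-2}\geq \mathbb{FR}_{m-1}>\Phi/\sqrt5$, using Part~\ref{dfactBF} only once plus the identity $\mathbb{FR}_n-\Phi/\sqrt5=\varphi^n/\bigl(\sqrt5(\Phi^n-\varphi^n)\bigr)$. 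What your route buys is conceptual clarity about where the constant $1/\sqrt5$ comes from: it is exactly the value of $\alpha$ at which the two parity thresholds reach $\Phi/\sqrt5=\lim_n\mathbb{FR}_n$, which also makes Conjecture~\ref{conj:Katzroots} plausible at a glance; the cost is the case bookkeeping ($m=1$, and the vacuous odd case $n-j=1$), which you handle correctly. The paper's version avoids the parity split until the end but pays for it with a heavier golden-ratio computation.
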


\begin{proof}[Proof of Theorem \ref{conj:Katzordering}]
We use Theorem~\ref{thm:alldfacts} Parts \ref{dfactrec2} and~\ref{dfactBF}.
\begin{align*}
    {}_n\tilde{p}_j(\alpha) &= d_{n-j-1}(\alpha)-\alpha d_{m-1}(\alpha)d_{n-m-j-1}(\alpha)\\
    &=d_{m-1}(\alpha)(d_{n-j-m}(\alpha) - \alpha d_{n-j-m-1}(\alpha) - \alpha^2d_{m-2}(\alpha)d_{n-j-m-1}(\alpha)\\
    &\geq d_{m-1}(\alpha)(d_{n-j-m-1}(\alpha)(\mathbb{FR}_{n-j-m}-\alpha)-\alpha^2d_{m-2}(\alpha)d_{n-j-m-1}(\alpha)\\
    &\geq d_{n-j-m-1}(\alpha)d_{m-2}(\alpha)(\mathbb{FR}_{m-1}(\mathbb{FR}_{n-j-m}-\alpha)-\alpha^2)\\
    &=d_{n-j-m-1}(\alpha)d_{m-2}(\alpha)\left[\frac{((1+\phi)^m-\phi^m)(\mathbb{FR}_{n-j-m}-\alpha) - \alpha^2\sqrt{5}((1+\phi)^{m-1}-\phi^m)}{\sqrt{5}((1+\phi)^{m-1}-\phi^{m-1})}\right]\\
    &=\frac{d_{n-j-m-1}(\alpha)d_{m-2}(\alpha)}{5((1+\phi)^{m-1}-\phi^m)((1+\phi)^{n-m-j}-\phi^{n-m-j}}\cdot\\
    &\qquad \left[ ((1+\phi)^m-\phi^m)((1+\phi)^{n-m-j+1}-\phi^{n-m-j+1} - \alpha \sqrt{5}((1+\phi)^{n-m-j}-\phi^{n-m-j})\right.\\
    &\qquad\qquad \left.-5\alpha^2((1+\phi)^{m-1}-\phi^{m-1})((1+\phi)^{n-m-j}-\phi^{n-m-j})\right]\\
        &\geq\frac{d_{n-j-m-1}(\alpha)d_{m-2}(\alpha)}{5((1+\phi)^{m-1}-\phi^m)((1+\phi)^{n-m-j}-\phi^{n-m-j}}\cdot\\
    &\qquad \left[ ((1+\phi)^m-\phi^m)((1+\phi)^{n-m-j+1}-\phi^{n-m-j+1} - (1+\phi)^{n-m-j}-\phi^{n-m-j}\right.\\
    &\qquad\qquad \left.-((1+\phi)^{m-1}-\phi^{m-1})((1+\phi)^{n-m-j}-\phi^{n-m-j})\right]
\end{align*}
Where the last line is due to the fact that $\alpha < 1/\sqrt{5}.$  From the line above, it is clear that the fraction is positive since $(1+\phi) >\phi.$  We now consider just the part in the square brackets.  Rearranging terms we find that it is equal to 
\begin{multline*}(1+\phi)^{n-j-1}((1+\phi)^2-(1+\phi) -1) + \phi^{n-j-1}(\phi^2-\phi-1) +\\ \phi^{m-1}(1+\phi)^{n-j-m}(-\phi(1+\phi) + \phi +1) + (1+\phi)^{m-1}\phi^{n-j-m}(-\phi(1+\phi)+1+\phi+1).\end{multline*}
A straight forward calculation shows that
\begin{align*}
(1+\phi)^2-(1+\phi) -1 &= 0\\
\phi^2-\phi-1 &= -2\phi\\
-\phi(1+\phi) + \phi +1 &= \phi\\
-\phi(1+\phi) + 1+\phi +1 &= 1+\phi.
\end{align*}
Thus the term in square brackets simplifies (using the fact, that $(1+\phi) = \phi^{-1}$) to
\begin{equation}\label{eq:presplit}
  -2\phi^{n-j+1}+\frac{\phi^m}{\phi^{n-j-m}}+\frac{\phi^{n-j-m}}{\phi^m}.  
\end{equation}
If $n-j$ is even then \eqref{eq:presplit} simplifies to
\[2(1-\phi^{n-j+1}) > 0,\]
and if $n-j$ is odd then \eqref{eq:presplit} simplifies to
\[-2\phi^{n-j+1}+2+2\phi+ > 0.\]

\end{proof}

\begin{prop}\label{prop:Katroot5}
 
    \[
    {}_n\tilde p_j \left(\frac{1}{\sqrt{5}}\right)
     =\frac{(1+\phi)^{2m+j-n}+\phi^{2m+j-n}-2\phi^{n-j}}
{\sqrt{5}^{ n-j-1}}.
    \]
\end{prop}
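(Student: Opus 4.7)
The plan is a direct substitution-and-simplify argument. Starting from the definition of ${}_n\tilde p_j$ in Proposition~\ref{prop:reduceThm2},
\[
{}_n\tilde p_j(\alpha) = d_{n-j-1}(\alpha) - \alpha\, d_{m-1}(\alpha)\, d_{n-m-j-1}(\alpha),
\]
I would set $\alpha = 1/\sqrt{5}$ and substitute the Binet-type closed form for $d_k(1/\sqrt{5})$ provided by Theorem~\ref{thm:alldfacts} Part~\ref{dfactatendpoints}. The powers of $\sqrt{5}$ assemble cleanly: $d_{n-j-1}(1/\sqrt{5})$ carries a factor of $5^{-(n-j-1)/2}$, while $\alpha\cdot d_{m-1}(1/\sqrt{5})\cdot d_{n-m-j-1}(1/\sqrt{5})$ carries $5^{-1/2}\cdot 5^{-(n-j-2)/2}=5^{-(n-j-1)/2}$, so a common factor of $1/5^{(n-j-1)/2}$ can be pulled out to match the denominator that appears on the right-hand side of the claim.

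After pulling out this common factor, the remaining numerator takes the form
\[
\bigl[(1+\phi)^{n-j} - \phi^{n-j}\bigr] - \bigl[(1+\phi)^m - \phi^m\bigr]\bigl[(1+\phi)^{n-m-j} - \phi^{n-m-j}\bigr].
\]
Expanding the product in the second bracket produces four pieces. The two ``diagonal'' pieces $(1+\phi)^m(1+\phi)^{n-m-j} = (1+\phi)^{n-j}$ and $\phi^m\phi^{n-m-j} = \phi^{n-j}$ interact with the first bracket so that the $(1+\phi)^{n-j}$ terms cancel entirely and the $\phi^{n-j}$ terms combine (with a sign flip) to give $-2\phi^{n-j}$, leaving only the two ``cross'' pieces $(1+\phi)^m\phi^{n-m-j}$ and $\phi^m(1+\phi)^{n-m-j}$.

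The last step is to collapse those cross pieces using the golden-ratio identity $\phi(1+\phi)=1$, equivalently $\phi = (1+\phi)^{-1}$, which gives $(1+\phi)^a\phi^b = (1+\phi)^{a-b} = \phi^{b-a}$ with no case analysis on the sign of $a-b$. This immediately yields $(1+\phi)^m\phi^{n-m-j} = (1+\phi)^{2m+j-n}$ and $\phi^m(1+\phi)^{n-m-j} = \phi^{2m+j-n}$. Assembling the three surviving terms $-2\phi^{n-j}$, $(1+\phi)^{2m+j-n}$, and $\phi^{2m+j-n}$ over the common denominator $5^{(n-j-1)/2}$ reproduces exactly the claimed formula. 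There is no real obstacle here; once the Binet substitution is in place the entire proof is a short exponent calculation, and the only thing to watch carefully is that the indexing of the Binet-type formula for $d_n(1/\sqrt{5})$ is consistent with the boundary values $d_0 = d_1 = 1$.
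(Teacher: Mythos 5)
Your proof is correct and follows essentially the same route as the paper's: substitute the Binet-type closed form for $d_k(1/\sqrt{5})$, pull out the common factor $\sqrt{5}^{-(n-j-1)}$, expand the product so the $(1+\phi)^{n-j}$ terms cancel and the $\phi^{n-j}$ terms combine to $-2\phi^{n-j}$, and collapse the cross terms via $\phi(1+\phi)=1$. Your closing remark about checking the indexing of the Binet formula against $d_0=d_1=1$ is well taken, since the version stated in Theorem~\ref{thm:alldfacts} Part~\ref{dfactatendpoints} and the version used in the Appendix differ by an index shift, and it is the Appendix's form (the one you use) that is consistent with the initial conditions.
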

The proof of Proposition~\ref{prop:Katroot5} is a straightforward application of Theorem~\ref{thm:alldfacts} Part~\ref{dfactatendpoints}, and is included in the Appendix. 

    \begin{corollary}\label{cor:numroot5}
    \[
    {}_n\tilde p_j\Big(\frac{1}{\sqrt{5}}\Big)>0 \text{ for all $n$, and }
    \lim_{n\to \infty} {}_n\tilde p_j\Big(\frac{1}{\sqrt{5}}\Big)  = 0.
    \]    \end{corollary}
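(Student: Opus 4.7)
My plan is to deduce the corollary directly from the explicit formula given in Proposition~\ref{prop:Katroot5}, by separating into the two parities of $n-j$ that determine the value of the exponent $2m+j-n$, and then bounding the numerator from below in each case.

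First I would recall that $m=\lceil (n-j)/2\rceil$. When $n-j$ is even, $m=(n-j)/2$ and hence $2m+j-n=0$, while when $n-j$ is odd, $m=(n-j+1)/2$ and $2m+j-n=1$. Substituting into the formula of Proposition~\ref{prop:Katroot5} gives
\[
{}_n\tilde p_j\!\left(\tfrac{1}{\sqrt{5}}\right)=\frac{2-2\phi^{n-j}}{\sqrt{5}^{\,n-j-1}}
\qquad\text{or}\qquad
{}_n\tilde p_j\!\left(\tfrac{1}{\sqrt{5}}\right)=\frac{(1+2\phi)-2\phi^{n-j}}{\sqrt{5}^{\,n-j-1}},
\]
according as $n-j$ is even or odd. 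Positivity is then immediate: in the even case $0<\phi<1$ gives $1-\phi^{n-j}>0$; in the odd case, since $n-j\geq 1$ we have $\phi^{n-j}\leq\phi<1$, so $(1+2\phi)-2\phi^{n-j}\geq 1+2\phi-2\phi=1>0$. This proves ${}_n\tilde p_j(1/\sqrt{5})>0$ for all $n$.

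For the limit statement, the numerators in both cases are bounded above (by $2$ and $1+2\phi$ respectively), while the denominator $\sqrt{5}^{\,n-j-1}$ tends to infinity as $n\to\infty$ with $j$ fixed. Hence ${}_n\tilde p_j(1/\sqrt{5})\to 0$.

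I do not anticipate a serious obstacle here: the whole proof is a routine parity case split followed by an elementary bound. The only thing to be careful about is computing the exponent $2m+j-n$ correctly from $m=\lceil(n-j)/2\rceil$, and keeping track of the fact that $1+\phi=\Phi=1/\phi$ (which is why the expressions $(1+\phi)^0=1$ and $(1+\phi)^1+\phi=1+2\phi$ appear so cleanly in the numerator).
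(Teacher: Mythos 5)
Your proof is correct, and it follows the same basic route as the paper --- both start from the closed form in Proposition~\ref{prop:Katroot5} and bound the numerator --- but the execution differs in a way worth noting. The paper does not evaluate the exponent $2m+j-n$; instead it regroups the numerator as $\bigl[(1+\phi)^{2m+j-n}-\phi^{n-j}\bigr]+\phi^{2m+j-n}\bigl(1-\phi^{2n-2j-2m}\bigr)$ and argues each piece is nonnegative, and for the limit it computes the three terms $(1+\phi)^{2m+j-n}/\sqrt{5}^{\,n-j-1}$, $\phi^{2m+j-n}/\sqrt{5}^{\,n-j-1}$, $-2\phi^{n-j}/\sqrt{5}^{\,n-j-1}$ separately, with a parity case split inside each. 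Your version observes up front that $m=\lceil(n-j)/2\rceil$ forces $2m+j-n\in\{0,1\}$, which collapses the numerator to $2-2\phi^{n-j}$ or $(1+2\phi)-2\phi^{n-j}$; positivity is then immediate (it also sidesteps a small imprecision in the paper, which asserts the term $(1+\phi)^{2m+j-n}$ is ``greater than $1$'' when in the even case it equals $1$), and the limit follows from bounded numerator over a denominator tending to infinity rather than three separate limit computations. The only hypothesis you are implicitly using is $n-j\geq 1$ (so that $\phi^{n-j}<1$ in the even case), which holds since distance-$(j+1)$ pairs must exist in $P_n$; it would be worth stating that explicitly, but it is not a gap.
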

    \begin{proof}
    For the inequality, first observe that the denominator is always positive and that the numerator is equivalent to 
    \[[(1+\phi)^{2m+j-n}-\phi^{n-j}]+\phi^{2m+j-n}(1-\phi^{2n-2j-2m}).\]
    Since $(1+\phi) > 1$ the first term in the square bracket is always greater than 1, and since $\phi < 1 $ the second term in the square bracket is always less then 1, hence the square bracket is always positive.  Moreover since $2n -2j-2m > 1$ the second term in the sum is also always positive, hence $\displaystyle {}_n\tilde p_j\Big(\frac{1}{\sqrt{5}}\Big)>0 .$  For the limit we consider each term separately
    \[\lim_{n\to\infty}\frac{(1+\phi)^{2m+j -n}}{\sqrt{5}^{ n-j-1}} = \lim_{n\to\infty} \begin{cases}\frac{1}{\sqrt{5}^{-j-1}}\left(\frac{1}{\sqrt{5}}\right)^n & \text{if $n-j$ is even}\\ \frac{(1+\phi)}{\sqrt{5}^{-j-1}}\left(\frac{1}{\sqrt{5}}\right)^n & \text{if $n-j$ is odd}\end{cases} = 0\]
    \[\lim_{n\to\infty}\frac{\phi^{2m+j -n}}{\sqrt{5}^{ n-j-1}} = \lim_{n\to\infty} \begin{cases}\frac{1}{\sqrt{5}^{-j-1}}\left(\frac{1}{\sqrt{5}}\right)^n & \text{if $n-j$ is even}\\ \frac{\phi}{\sqrt{5}^{-j-1}}\left(\frac{1}{\sqrt{5}}\right)^n & \text{if $n-j$ is odd}\end{cases} = 0\]
    \[\lim_{n\to\infty}-2\frac{\phi^{n-j}}{\sqrt{5}^{ n-j-1}} = \lim_{n\to\infty} -2\frac{\phi^{-j}}{\sqrt{5}^{-j-1}}\left(\frac{\phi}{\sqrt{5}}\right)^n  = 0.\]

    \end{proof}
A direct application of Theorem~\ref{thm:alldfacts} Part~\ref{dfactatendpoints} to the equation for ${}_n\tilde{p}_j$ given in Proposition~\ref{prop:reduceThm2} yields the following. 
    \begin{lemma}\label{lem:numhalf}

     

    \[
    {}_n\tilde p_j\Big(\frac12\Big) = \frac{n-j-m(n-m-j)}{2^{n-j-1}}=\frac{n-j-\lceil \frac {n-j} 2\rceil(\lfloor \frac {n-j} 2\rfloor -j)}{2^{n-j-1}}\]
    and thus
    \[
            {}_n\tilde p_j\Big(\frac12\Big)=0 \text{ for }  n-j=4, \quad \text{and} \quad
            {}_n\tilde p_j\Big(\frac12\Big)<0 \text{ for }  n-j>4.
    \]
    \end{lemma}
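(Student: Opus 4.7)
The plan is to substitute $\alpha = 1/2$ directly into the expression ${}_n\tilde p_j(\alpha) = d_{n-j-1}(\alpha) - \alpha\, d_{m-1}(\alpha)\, d_{n-m-j-1}(\alpha)$ from Proposition~\ref{prop:reduceThm2}, and then invoke the explicit formula $d_n(1/2) = (n+1)/2^n$ from Theorem~\ref{thm:alldfacts} Part~\ref{dfactatendpoints} to evaluate each $d$-factor in closed form. The computation I expect to run is
\[
{}_n\tilde p_j\!\left(\tfrac12\right) = \frac{n-j}{2^{n-j-1}} - \frac{1}{2}\cdot\frac{m}{2^{m-1}}\cdot\frac{n-m-j}{2^{n-m-j-1}} = \frac{(n-j) - m(n-m-j)}{2^{n-j-1}},
\]
where the three denominators collapse cleanly because $1 + (m-1) + (n-m-j-1) = n-j-1$. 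This yields the first displayed equality in the lemma, and the alternative form then follows immediately from $m = \lceil (n-j)/2\rceil$ and $n-m-j = \lfloor (n-j)/2\rfloor$.

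For the sign analysis I will set $t = n-j$ and split on the parity of $t$, since the denominator $2^{n-j-1}$ is always positive. If $t = 2s$ is even, then $m(n-m-j) = s^2 = t^2/4$, so the numerator is $t(1 - t/4)$, which vanishes exactly at $t = 4$ and is strictly negative for $t \geq 6$. If $t = 2s+1$ is odd, then $m(n-m-j) = s(s+1)$, so the numerator is $2s+1 - s(s+1) = -(s^2 - s - 1)$, which is strictly negative whenever $s \geq 2$, i.e.\ $t \geq 5$. Combining the even and odd subcases gives the claimed dichotomy: equality to zero at $n-j = 4$ and strict negativity for $n-j > 4$.

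I do not anticipate any real obstacle here: the whole argument is a single substitution followed by elementary case analysis on the parity of $n-j$. The only point requiring care is the bookkeeping of the exponents of $2$ in the three denominators, so that the prefactor $\alpha = 1/2$ is correctly absorbed and the resulting expression lands over the common denominator $2^{n-j-1}$.
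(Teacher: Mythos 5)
Your proof is correct and is exactly the ``direct application'' the paper has in mind: substitute the closed form $d_n(1/2)=(n+1)/2^n$ from Theorem~\ref{thm:alldfacts} Part~\ref{dfactatendpoints} into the expression for ${}_n\tilde p_j$ from Proposition~\ref{prop:reduceThm2}, collapse the powers of $2$, and finish with the parity case analysis on $n-j$. The only caveat is that your computation yields the second form as $\bigl(n-j-\lceil\tfrac{n-j}{2}\rceil\lfloor\tfrac{n-j}{2}\rfloor\bigr)/2^{n-j-1}$, without the stray ``$-j$'' in the lemma's displayed statement; since your version is the one consistent with the first equality and with the sign conclusions at $n-j=4$ and $n-j>4$, that discrepancy is a typo in the statement rather than a gap in your argument.
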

Lemma~\ref{lem:numhalf} and Corollary~\ref{cor:numroot5} give the following result supporting Conjecture \ref{conj:Katzroots}.
\begin{thm}\label{thm:rootsexist}
    For $n-j\geq  5$, ${}_np_j(\alpha)$ has a root between $\frac{1}{\sqrt{5}}$ and $\frac12$. 
\end{thm}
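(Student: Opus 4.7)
The plan is to apply the intermediate value theorem to the reduced polynomial ${}_n\tilde p_j(\alpha)$, using the fact that, by Proposition~\ref{prop:reduceThm2}, the zero sets of ${}_n\tilde p_j$ and ${}_np_j$ coincide, as do their regions of positivity. Since ${}_n\tilde p_j(\alpha) = d_{n-j-1}(\alpha) - \alpha\, d_{m-1}(\alpha)\, d_{n-m-j-1}(\alpha)$ is a polynomial in $\alpha$, it is continuous on the closed interval $[1/\sqrt{5}, 1/2]$, so IVT applies provided I can locate a sign change.

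First I would quote Corollary~\ref{cor:numroot5} to conclude that ${}_n\tilde p_j(1/\sqrt{5}) > 0$ for every admissible $n, j$. Next I would invoke Lemma~\ref{lem:numhalf}, which directly gives ${}_n\tilde p_j(1/2) < 0$ in precisely the regime $n - j \geq 5$ that the theorem concerns. Continuity of the polynomial together with the sign change then yields a root $\alpha^\star \in (1/\sqrt{5}, 1/2)$ of ${}_n\tilde p_j$. By Proposition~\ref{prop:reduceThm2}, $\alpha^\star$ is also a root of ${}_np_j(\alpha)$, completing the argument.

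One small point I would address for rigor is that the interval $(1/\sqrt{5}, 1/2)$ actually lies within the admissible range $0 < \alpha < 1/\rho(A_{P_n})$. This is automatic because $\rho(A_{P_n}) = 2\cos(\pi/(n+1)) < 2$, so $1/\rho(A_{P_n}) > 1/2 > 1/\sqrt{5}$ for all $n \geq 2$; hence the Katz similarity index and all derived expressions are well defined throughout the interval under consideration.

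There is essentially no significant obstacle in this proof: the hard work has already been done in establishing the explicit formula for $K_{P_n}(\alpha)_{i,j}$, the reduction in Proposition~\ref{prop:reduceThm2}, and the two endpoint evaluations in Corollary~\ref{cor:numroot5} and Lemma~\ref{lem:numhalf}. The theorem itself is packaged as an immediate IVT corollary of these inputs, designed to set up the computational and conjectural discussion around Conjecture~\ref{conj:Katzroots}, in which one further expects these roots $\alpha_n$ to converge down to $1/\sqrt{5}$.
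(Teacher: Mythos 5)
Your proposal is correct and is exactly the paper's argument: the paper derives Theorem~\ref{thm:rootsexist} directly from Corollary~\ref{cor:numroot5} (positivity of ${}_n\tilde p_j$ at $1/\sqrt{5}$) and Lemma~\ref{lem:numhalf} (negativity at $1/2$ when $n-j>4$) via the intermediate value theorem, transferring the root to ${}_np_j$ through Proposition~\ref{prop:reduceThm2}. Your added remark on admissibility of the interval is a harmless bonus.
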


\begin{figure}[tph]
     \centering
      \begin{subfigure}[b]{0.45\textwidth}
         \centering
	   \includegraphics[width=.95\linewidth]{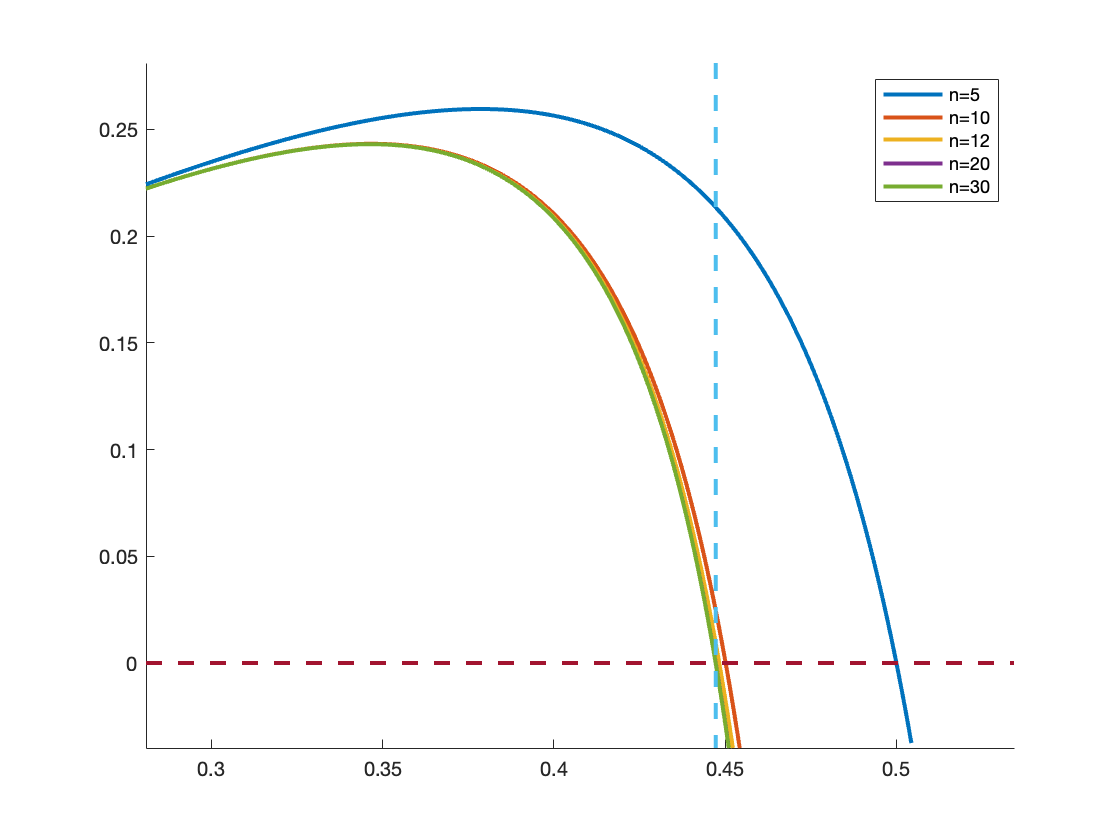}
     \end{subfigure}
     \begin{subfigure}[b]{0.45\textwidth}
         \centering
	   \includegraphics[width=.95\linewidth]{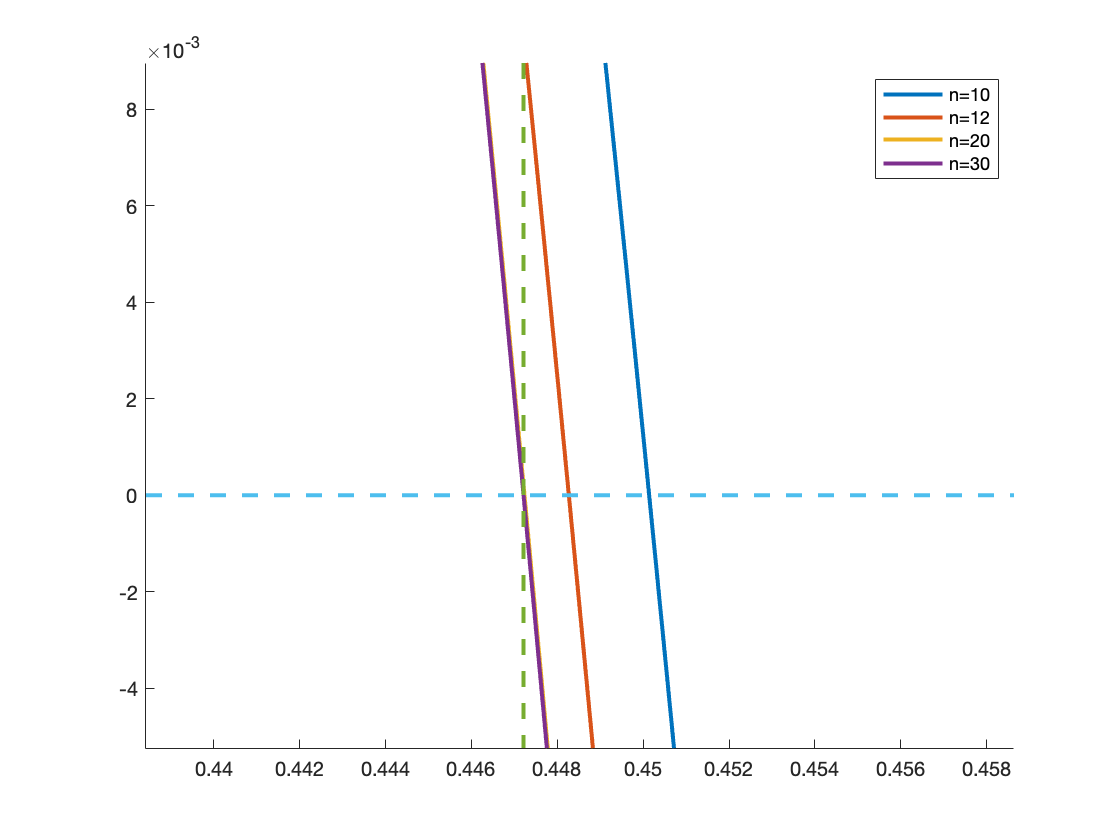}
     \end{subfigure}
     \caption[Behavior near $1/\sqrt{5}$]{In these plots we consider the behavior of $p_1$ near $\alpha = 1/\sqrt{5}$ for several values of $n$.  The $x$-axis corresponds to $\alpha$ values and the $y$-axis to values of ${}_np_1$. The plot on the right is a magnified version of the one on the left.  The value $\alpha = 1/\sqrt{5}$ is indicated by a dashed line. We note that even under high magnification there is essentially no difference between $n= 20$ and $n =30$, which provides further evidence of the convergence of these functions as $n \to \infty$.}
         \label{fig:remainder}
    \end{figure}

We call the roots $\alpha_n$ of the polynomials ${}_n p_1$ the {\it cut-off} values for the path graph $P_n$. For any $\alpha$ above such a cut-off value, the Katz similarity index and resistance distance rankings agree on all pairs of vertices and disagree for some pair of vertices if $\alpha>\alpha_n$. In Figure~\ref{fig:remainder} we plot the polynomials ${}_np_1$ near $\alpha = 1/\sqrt{5}$ for several values of $n$.   Positive $y$-values correspond to agreement between Katz similarity index and resistance distance rankings on pairs of vertices.  We see that the values of the cut-off values, and indeed the behavior of the ${}_np_1$ appear indistinguishable quickly.

\begin{cnjroots}
The cut-off values $\alpha_n$ converge to $\frac{1}{\sqrt{5}}$ from above. 
\end{cnjroots}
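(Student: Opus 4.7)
My plan is to combine the a priori localization of $\alpha_n$ in $(1/\sqrt{5}, 1/2)$ from Theorem~\ref{thm:rootsexist} with an asymptotic sign analysis of ${}_n\tilde p_1(\alpha)$ just above $1/\sqrt 5$. The lower bound $\alpha_n > 1/\sqrt{5}$ is essentially in hand: Theorem~\ref{conj:Katzordering} together with Corollary~\ref{cor:numroot5} give ${}_n\tilde p_1 > 0$ on $[0, 1/\sqrt 5]$, so every positive root lies strictly to the right of $1/\sqrt 5$. What remains is to show that for every $\delta > 0$ and every sufficiently large $n$, ${}_n\tilde p_1$ already has a zero in $(1/\sqrt 5, 1/\sqrt 5 + \delta)$.

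The recursion of Theorem~\ref{thm:alldfacts} Part~\ref{dfactrecursion} has characteristic polynomial $x^2 - x + \alpha^2$; letting $\lambda_\pm = (1\pm s)/2$ with $s = \sqrt{1-4\alpha^2}$, the initial data $d_0 = d_1 = 1$ force
\[
d_n(\alpha) \;=\; \frac{\lambda_+^{n+1} - \lambda_-^{n+1}}{s}.
\]
Substituting into ${}_n\tilde p_1$ and using $\lambda_+\lambda_- = \alpha^2$ and $\lambda_+ + \lambda_- = 1$, a direct calculation gives, for odd $n = 2M+1$,
\[
{}_{2M+1}\tilde p_1(\alpha) \;=\; \frac{(s-\alpha)\lambda_+^{2M} \;-\; (s+\alpha)\lambda_-^{2M} \;+\; 2\alpha^{2M+1}}{s^2},
\]
and an analogous identity for even $n$. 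The key observation is that $s - \alpha$ is positive on $(0, 1/\sqrt 5)$, vanishes at $1/\sqrt 5$, and is negative on $(1/\sqrt 5, 1/2)$.

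Fixing any $\alpha_0 \in (1/\sqrt 5, 1/2)$, one checks elementarily that $\lambda_+(\alpha_0)$ strictly exceeds both $\lambda_-(\alpha_0)$ and $\alpha_0$, so dividing the closed form by $\lambda_+^{2M}$ yields
\[
\frac{{}_{2M+1}\tilde p_1(\alpha_0)}{\lambda_+^{2M}} \;\longrightarrow\; \frac{s(\alpha_0) - \alpha_0}{s(\alpha_0)^2} \;<\; 0 \qquad (M \to \infty).
\]
Hence ${}_n\tilde p_1(\alpha_0) < 0$ for all odd $n$ sufficiently large, and combined with ${}_n\tilde p_1(1/\sqrt 5) > 0$ from Corollary~\ref{cor:numroot5}, the intermediate value theorem supplies a root in $(1/\sqrt 5, \alpha_0)$. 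The even case follows from the analogous identity. Taking $\alpha_n$ to be the smallest root of ${}_n p_1$ above $1/\sqrt 5$ then forces $1/\sqrt 5 < \alpha_n < \alpha_0$ for all large $n$, and since $\alpha_0 > 1/\sqrt 5$ is arbitrary, $\alpha_n \searrow 1/\sqrt 5$.

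The main obstacle I anticipate is pinning down exactly which root of ${}_n p_1$ is the cut-off $\alpha_n$, since nothing proved so far rules out multiple roots in $(1/\sqrt 5, 1/2)$. The cleanest fix would be to show that near $1/\sqrt 5$ the polynomial is strictly decreasing in $\alpha$ for large $n$, forcing uniqueness of the root there; this looks tractable by differentiating the closed form (the dominant term of $\partial_\alpha {}_n\tilde p_1$ is driven by $s'(\alpha) = -4\alpha/s < 0$ times $\lambda_+^{2M}/s^2$), but requires careful uniform control of the subleading contributions. Failing that, simply defining $\alpha_n := \inf\{\alpha > 1/\sqrt 5 : {}_n\tilde p_1(\alpha) = 0\}$ would make the argument above give the conjectured convergence without further work.
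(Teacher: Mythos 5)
You should first be aware that the paper does not prove this statement: it is stated explicitly as a conjecture, supported only by the plots in Figure~\ref{fig:remainder} and by the partial result Theorem~\ref{thm:rootsexist} (existence of a root of ${}_np_j$ in $(1/\sqrt5,1/2)$ for $n-j\geq 5$), and the authors call its investigation an open problem. So there is no proof in the paper to compare yours against; your proposal is an attempt at genuinely new content.

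On its own merits, your argument is essentially sound. The closed form $d_n(\alpha)=(\lambda_+^{n+1}-\lambda_-^{n+1})/s$ is correct (it is consistent with Part~\ref{dfactatendpoints} at $\alpha=1/\sqrt5$, where $\lambda_\pm/s$ become $(1+\phi),\phi$ up to the normalization $\sqrt5$), and your identity ${}_{2M+1}\tilde p_1=\bigl[(s-\alpha)\lambda_+^{2M}-(s+\alpha)\lambda_-^{2M}+2\alpha^{2M+1}\bigr]/s^2$ checks out using $\lambda_+\lambda_-=\alpha^2$, $\lambda_++\lambda_-=1$. Since $\lambda_+>\max(\lambda_-,\alpha_0)$ for $\alpha_0\in(1/\sqrt5,1/2)$, the normalized limit $(s-\alpha_0)/s^2<0$ is right, and together with ${}_n\tilde p_1(1/\sqrt5)>0$ (Corollary~\ref{cor:numroot5}) and Proposition~\ref{prop:reduceThm2} the intermediate value theorem puts a root of ${}_np_1$ in $(1/\sqrt5,\alpha_0)$ for all large $n$. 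The one genuine gap is the one you name yourself: the paper never establishes that "the" cut-off value is well defined (i.e., that the set of $\alpha$ on which the rankings agree is an interval), so you must adopt a definition. Your fallback, $\alpha_n:=\inf\{\alpha>0:\ \text{some }{}_np_j(\alpha)\le 0\}$, does make the argument close: the lower bound $\alpha_n>1/\sqrt5$ follows because for fixed $n$ there are only finitely many $j$ and each ${}_n\tilde p_j$ is continuous and positive on $(0,1/\sqrt5]$ (Theorem~\ref{conj:Katzordering} plus Corollary~\ref{cor:numroot5}), while the upper bound needs only the $j=1$ root you produce, since the infimum over all $j$ is at most the first root of ${}_np_1$. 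With that definition stated explicitly, and the even case $n=2M$ written out (it has the same structure, with exponent $2M-1$ and a single $\alpha^{2M-1}$ term), this is a complete proof of the conjecture; the strict-monotonicity/uniqueness refinement you mention is not needed for the convergence claim itself.
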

The investigation of these cut-off points is an interesting open problem that has not yet been investigated in the literature.

\section{Appendix}

\begin{proprootfive}
    \[
    {}_n\tilde p_j \left(\frac{1}{\sqrt{5}}\right)
     =\frac{(1+\phi)^{2m+j-n}+\phi^{2m+j-n}-2\phi^{n-j}}
{\sqrt{5}^{ n-j-1}}
    \]
    \end{proprootfive}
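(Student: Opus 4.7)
The plan is to substitute $\alpha = 1/\sqrt{5}$ directly into the expression
\[
{}_n\tilde p_j(\alpha) = d_{n-j-1}(\alpha) - \alpha\, d_{m-1}(\alpha)\, d_{n-m-j-1}(\alpha)
\]
from Proposition~\ref{prop:reduceThm2} (with $m = \lceil (n-j)/2 \rceil$), then apply the Binet-type closed form from Theorem~\ref{thm:alldfacts} Part~\ref{dfactatendpoints}. The key algebraic identity that makes everything collapse is $\phi(1+\phi) = 1$, which follows from $\phi = (\sqrt{5}-1)/2$ and $1+\phi = (\sqrt{5}+1)/2$.

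First, I would write each factor $d_k(1/\sqrt{5})$ in the closed form furnished by Part~\ref{dfactatendpoints}, giving denominators of the shape $\sqrt{5}^{\,k}$ (or $\sqrt{5}^{\,k-1}$, matching whichever convention the paper actually uses; the powers of $\sqrt{5}$ combine cleanly either way). The single stand-alone term $d_{n-j-1}(1/\sqrt{5})$ already has the target denominator $\sqrt{5}^{\,n-j-1}$ up to reindexing, and the product $d_{m-1}(1/\sqrt{5})\, d_{n-m-j-1}(1/\sqrt{5})$ contributes a $\sqrt{5}^{\,n-j-2}$ denominator which, after multiplication by $\alpha = 1/\sqrt{5}$, matches.

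Second, I would expand the product in the numerator into the four monomial cross-terms
\[
(1+\phi)^{\bullet}(1+\phi)^{\bullet}, \quad (1+\phi)^{\bullet}\phi^{\bullet}, \quad \phi^{\bullet}(1+\phi)^{\bullet}, \quad \phi^{\bullet}\phi^{\bullet}.
\]
The two purely-$(1+\phi)$ and purely-$\phi$ terms combine to give $(1+\phi)^{n-j}$ and $\phi^{n-j}$, which will partially cancel against the corresponding terms coming from $d_{n-j-1}(1/\sqrt{5})$. For the two mixed cross-terms, I use $\phi(1+\phi) = 1$ to strip off matched pairs of bases: since $m = \lceil (n-j)/2 \rceil$ guarantees $m \geq n-m-j$ (so $2m+j-n \in \{0,1\} \geq 0$), one gets
\[
(1+\phi)^m \phi^{n-m-j} = (1+\phi)^{2m+j-n}, \qquad \phi^m (1+\phi)^{n-m-j} = \phi^{2m+j-n},
\]
uniformly in the parity of $n-j$.

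Third and last, I would assemble the pieces. The $(1+\phi)^{n-j}$ terms cancel exactly, leaving the two residual mixed-cross-term contributions plus the unmatched $-\phi^{n-j}$ (twice, once from each of $d_{n-j-1}$ and the product), producing precisely
\[
\frac{(1+\phi)^{2m+j-n} + \phi^{2m+j-n} - 2\phi^{n-j}}{\sqrt{5}^{\,n-j-1}}.
\]
There is no real conceptual obstacle here; the only thing to be careful about is (a) keeping the powers of $\sqrt{5}$ straight, and (b) verifying that $2m+j-n \geq 0$ so that the golden-ratio simplification collapses the two mixed terms to positive powers of $(1+\phi)$ and $\phi$ rather than negative ones. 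Both points are routine given the definition of $m$.
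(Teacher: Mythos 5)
Your proposal is correct and follows essentially the same route as the paper's own proof: substitute the closed form $d_k(1/\sqrt{5}) = ((1+\phi)^{k+1}-\phi^{k+1})/\sqrt{5}^{\,k}$ from Theorem~\ref{thm:alldfacts} Part~\ref{dfactatendpoints} into ${}_n\tilde p_j$, combine the powers of $\sqrt{5}$, expand the product into four cross-terms, and collapse the mixed terms via $\phi(1+\phi)=1$. The bookkeeping you describe (cancellation of $(1+\phi)^{n-j}$, the doubled $-\phi^{n-j}$, and the two residual terms $(1+\phi)^{2m+j-n}+\phi^{2m+j-n}$) matches the paper's computation exactly.
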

    \begin{proof}
By Theorem~\ref{thm:alldfacts} Part \ref{dfactatendpoints}, $d_{i}\Big(\frac{1}{\sqrt{5}}\Big) =\frac{(1+\phi)^{i+1} - \phi^{i+1}}{\sqrt{5}^{i }}$.
Then, 
\begin{align*}
{}_n\tilde p_j \left(\frac{1}{\sqrt{5}}\right)
&= d_{n-j-1}\Big(\frac{1}{\sqrt{5}}\Big) -\frac{1}{\sqrt{5}} \cdot  d_{m-1}\Big(\frac{1}{\sqrt{5}}\Big)d_{n-m-j-1}\Big(\frac{1}{\sqrt{5}}\Big) \\
&=\frac{(1+\phi)^{n-j} - \phi^{n-j}}{\sqrt{5}^{ n-j-1}} - \frac{1}{\sqrt{5}}\cdot \frac{(1+\phi)^{m} - \phi^{m}}{\sqrt{5}^{ m-1}} \cdot \frac{(1+\phi)^{n-m-j} - \phi^{n-m-j}}{\sqrt{5}^{ n-m-j-1}}\\[2mm]
&=\frac{(1+\phi)^{n-j} - \phi^{n-j}-((1+\phi)^{m} - \phi^{m})((1+\phi)^{n-m-j} - \phi^{n-m-j})}{\sqrt{5}^{ n-j-1}}\\[2mm]
&=\frac{(1+\phi)^{n-j} - \phi^{n-j}-
(1+\phi)^{n-j}-\phi^{n-j}+ (1+\phi)^{2m+j-n}+\phi^{2m+j-n}}
{\sqrt{5}^{ n-j-1}}\\[2mm]
&=\frac{(1+\phi)^{2m+j-n}+\phi^{2m+j-n}-2\phi^{n-j}}
{\sqrt{5}^{ n-j-1}}
\end{align*}

\end{proof}

\begin{lemmacycle1}

For a cycle H with $n>4$ vertices with adjacency matrix $A_{C_n}$ and $0<\alpha<\frac{1}{\rho(A_{C_n})}$,

\[\det(I-\alpha A_{C_n}) = d_{n-1}(\alpha) - 2\alpha^n -2\alpha^2 d_{n-2}(\alpha).\]
\end{lemmacycle1}

\begin{proof}

Let $M_{C_n} =(I-\alpha A_{C_n})$. 
For this graph, we will use cofactor expansion along the first column to calculate $\det(M_{C_n})$ and define $M_{C_n}(i,j)$ to be $M_{C_n}$ with the $i$th row and $j$th column removed. 

Expanding, along the first column we find

\[\det(M_{C_n}) =  M_{C_n}(1,1)+\alpha M_{C_n}(2,1) + (-1)^{n+1}(-\alpha)M_{C_n}(n,1).\]

First, we observe that $M_{C_n}(1,1) = M_{P_{n-1}}$ and then we determine $\det(M_{C_n}(2,1) )$ and $\det(M_{C_n}(n,1))$ using cofactor expansion along the first row.

Hence
\begin{align*}
\det(M_{C_n}(2,1)) =& -\alpha M_{P_{n-2}}  + (-1)^{1+n-1}(-\alpha) T_{n-2}\\
=& -\alpha d_{n-2}(\alpha) + (-1)^n (-\alpha)(-\alpha)^{n-2}\\
=& -\alpha d_{n-2}(\alpha) -\alpha^{n-1}
\end{align*}
and

\begin{align*}
\det(M_{C_n}(n,1)) =& (-\alpha) L_{n-2} + (-1)^{1+n-1}(-\alpha) M_{P_{n-2}}\\
=& (-\alpha)(-\alpha)^{n-2} +(-1)^{1+n-1}(-\alpha) d_{n-2}\\
=& (-\alpha)^{n-1} -(-1)^n\alpha d_{n-2}(\alpha),
\end{align*}
where $T_{n-2}$ is an upper triangular matrix with $-\alpha$ along the diagonal and $L_{n-2}$ is a lower triangular matrix with $-\alpha$ along the diagonal.

Plugging the cofactors in, we get
\begin{align*}
\det(M_{C_n}) =& d_{n-1}(\alpha) +\alpha \bigg(-\alpha d_{n-2}(\alpha) -(\alpha)^{n-1} \bigg)+ (-1)^{n+1}(-\alpha)\bigg((-\alpha)^{n-1} -(-1)^n\alpha d_{n-2}(\alpha)\bigg)\\
=& d_{n-1}(\alpha)-2\alpha^n - 2\alpha^2d_{n-2}(\alpha).
\end{align*}

\end{proof}

\begin{proof}[Proof of Theorem~\ref{thm:alldfacts}]
\ 

\vspace*{3mm} 

\noindent \textit{\textbf{Part  \ref{dfactrecursion}.} 
$d_0(\alpha) = d_{1}(\alpha) = 1$, and for $i\geq 2$, $ d_n(\alpha) = d_{n-1}(\alpha) - \alpha^2d_{n-2}(\alpha) $}.
    \begin{align*}
     d_{i}(\alpha)  &= 
     \sum_{k=0}^{\lfloor \frac i 2\rfloor} (-1)^k {i -k \choose k} \alpha^{2k}
     = 1 + \sum_{k=1}^{\lfloor \frac i 2\rfloor-1} (-1)^k {i -k \choose k} \alpha^{2k}+  (-1)^{\lfloor \frac i 2\rfloor }{i - \lfloor \frac i 2 \rfloor \choose \lfloor \frac i 2\rfloor}\alpha^{2\lfloor \frac i 2\rfloor}\\
      &= 1 + \sum_{k=1}^{\lfloor \frac i 2\rfloor-1} (-1)^k {i-1 -k \choose k} \alpha^{2k}+\sum_{k=1}^{\lfloor \frac i 2\rfloor-1} (-1)^k {i-1 -k \choose k-1} \alpha^{2k} + (-1)^{\lfloor \frac i 2\rfloor}{i - \lfloor \frac i 2 \rfloor \choose \lfloor \frac i 2\rfloor} \alpha^{2\lfloor \frac i 2\rfloor}\\
      & = \sum_{k=0}^{\lfloor \frac i 2\rfloor-1} (-1)^k {i-1 -k \choose k} \alpha^{2k}-\alpha^2\sum_{k=0}^{\lfloor \frac {i-2} 2\rfloor-1} (-1)^k {i-2 -k \choose k} \alpha^{2k} + (-1)^{\lfloor \frac i 2\rfloor}{i - \lfloor \frac i 2 \rfloor \choose \lfloor \frac i 2\rfloor} \alpha^{2\lfloor \frac i 2\rfloor}
     \end{align*}
Note that 
\[
 \sum_{k=0}^{{\lfloor \frac i 2\rfloor}-1} (-1)^k {i-1 -k \choose k} \alpha^{2k} = 
 \begin{cases}
     \displaystyle\sum_{k=0}^{{\lfloor \frac {i-1} 2\rfloor}} (-1)^k {i-1 -k \choose k} \alpha^{2k}  & i \text{ even}\\[4mm]
     \displaystyle\sum_{k=0}^{{\lfloor \frac {i-1} 2\rfloor}} (-1)^k {i-1 -k \choose k} \alpha^{2k}  - (-1)^{\lfloor \frac i 2\rfloor}  \alpha^{2{\lfloor \frac i 2\rfloor}} & i \text{ odd}
 \end{cases}
\]
and
\[
\sum_{k=0}^{\lfloor \frac {i-2} 2\rfloor-1} (-1)^k {i-2 -k \choose k} \alpha^{2k}
=\sum_{k=0}^{\lfloor \frac {i-2} 2\rfloor} (-1)^k {i-2 -k \choose k} \alpha^{2k}+ (-1)^{\lfloor \frac {i} 2\rfloor}  {i-1 -\lfloor \frac {i} 2\rfloor \choose \lfloor \frac i 2 \rfloor -1} \alpha^{2\lfloor \frac {i} 2\rfloor-2}.
\]
The result follows from 
$\displaystyle
{i - \lfloor \frac i 2 \rfloor \choose \lfloor \frac i 2\rfloor} -  {i-1 -\lfloor \frac {i} 2\rfloor \choose \lfloor \frac i 2 \rfloor -1}  = 
\begin{cases}
0 & i \text{ even}\\
1 & i \text{ odd}.    \\[2mm]
\end{cases}
$

\noindent \textit{\textbf{Part  \ref{dfactrec2}.}
For $n\geq k \geq 1$, $d_n(\alpha) = d_k(\alpha) d_{n-k}(\alpha)-\alpha^2d_{k-1}(\alpha)d_{n-k-1}(\alpha)$.
}\\ [1mm]
Part~\ref{dfactrecursion} gives 
\[
d_n(\alpha)= d_{1}(\alpha)d_{n-1}(\alpha)-\alpha^2d_{0}(\alpha)d_{n-2}(\alpha).
\]
Assume $d_{n}(\alpha) = d_{k-1}(\alpha)d_{n-k+1}(\alpha)-\alpha^2 d_{k-2}(\alpha)d_{n-k}(\alpha) $. Then (again, using Part~\ref{dfactrecursion}):
\begin{align*}
d_{n}(\alpha) &= d_{n-k}(\alpha)(d_{k-1}(\alpha)-\alpha^2 d_{k-2}(\alpha ) )- \alpha^2 d_{k-1}(\alpha)d_{n-k-1}(\alpha)\\
&= d_{n-k}(\alpha)d_{k}(\alpha) - \alpha^2 d_{k-1}(\alpha)d_{n-k-1}(\alpha).\\
\end{align*}

\noindent \textit{\textbf{Part~\ref{dfactprod}:} For $n\geq k\geq 1$, $d_k(\alpha)d_n(\alpha)-d_{k-1}(\alpha)d_{n+1}(\alpha) = \alpha^{2k}d_{n-k}(\alpha)$.}\\[2mm]
Let $k\geq1$. Then, 
\[
d_1d_k-d_0d_{k+1} = d_k-d_{k+1} = \alpha^2d_{k-1}
\]
Assume that $k\geq i\geq 2$ and $d_{i-1}d_{k-1}-d_{i-2}d_{k} = \alpha^{2(i-1)}d_{i-k}$. Then, 
\begin{align*}
d_id_k-d_{i-1}d_{k+1} &= d_{i-1}d_k-\alpha^2d_{i-2}d_k-d_{i-1}d_{k}+\alpha^2d_{i-1}d_{k-1} \\
&=  -\alpha^2d_{i-2}d_k+\alpha^2d_{i-1}d_{k-1} = \alpha^2\alpha^{2(i-1)}d_{i-k}.
\end{align*}

\noindent\textit{\textbf{Part  \ref{dfactboundseasy}.} For $n\geq 2$,   $d_{n-1}(\alpha)>d_{n}(\alpha)>\frac{1}{2}  d_{n-1}(\alpha)>0$.}\\[1mm]
    
     Note that for $\alpha \in (0,.5)$
     $$
     d_1(\alpha)= 1>d_2(\alpha) = 1-\alpha^2 >\frac 34 >\frac 1{2}\cdot d_1(\alpha) >0.
     $$     
      Assume that $d_{n-2}(\alpha)> d_{n-1}(\alpha)>\frac12 d_{n-2}(\alpha)>0$. Then $\frac12 d_{n-1}(\alpha)>0$ by hypothesis, and since $d_{n-2}(\alpha)>0$ also by hypothesis,  $d_{n-1}(\alpha) > d_{n}(\alpha)$ follows from Part~\ref{dfactrecursion}. The remaining inequality follows from 
       \[
      d_{n}(\alpha) - \frac12 d_{n-1}(\alpha) = \frac12 d_{n-1} - \alpha^2 d_{n-2}(\alpha) >\left(\frac14 - \alpha^2\right)d_{n-2}(\alpha)>0.
      \]

\noindent\textit{\textbf{Part  \ref{dfactatendpoints}.} $\displaystyle d_n\left(\frac 1 2\right) = \frac{n+1}{2^{n}}$ and $\displaystyle d_n\left(\frac 1 {\sqrt{5}}\right) =  \frac{(1+\phi)^{n+1}-\phi^{n+1}}{\sqrt{5}
^{n }}$.}\\[2mm]

    By Part \ref{dfactrecursion}, $d_0(\frac 1 2) = 1 = \frac 1 {2^0}$, $d_1(\frac 1 2) = 1 = \frac 2 {2^1}$. Assume $d_{n-1} (\frac 1 2)= \frac n {2^{n-1}}$ and $d_{n-2}(\frac 1 2) = \frac{n-1}{2^{n-2}}$, then 
    \begin{align*}
    d_n\Big(\frac 1 2\Big)&=d_{n-1}\Big(\frac 1 2\Big)-\frac14 \cdot d_{n-2}\Big(\frac 1 2\Big) \\
    &= \frac n {2^{n-1}}-\frac 1 4 \cdot  \frac{n-1}{2^{n-2}}=  \frac{2n - n+1}{2^{n}}=  \frac{ n+1}{2^{n}}
    \end{align*}

By Part \ref{dfactrecursion}, 
$$
d_0\Big(\frac 1 {\sqrt{5}}\Big) = 1 = \frac {(1+\phi)^{1}-\phi^{1}}{\sqrt{5}
^{0 }}
\quad
\text{and} 
\quad 
d_1\Big(\frac 1 {\sqrt{5}}\Big)  = 1 = \frac {(1+\phi)^{2}-\phi^{2}}{\sqrt{5}
^{1 }}.
$$
Assume that $d_{n-1} \big(\frac 1 {\sqrt{5}}\big)= \frac{(1+\phi)^{n}-\phi^{n}}{\sqrt{5}
^{n-1 }}$ and $d_{n-2}(\frac 1 {\sqrt{5}}) = \frac{(1+\phi)^{n-1}-\phi^{n-1}}{\sqrt{5}
^{n-2 }}$.
Then     
\begin{align*}
    d_n\left(\frac 1 {\sqrt{5}}\right) &=
    d_{n-1}\left(\frac 1 {\sqrt{5}}\right)- \frac15 \cdot 
    d_{n-2}\left(\frac 1 {\sqrt{5}}\right)\\
& = \frac{(1+\phi)^{n}-\phi^{n}}{\sqrt{5}
^{n-1 }}-\frac15 \cdot\frac{(1+\phi)^{n-1}-\phi^{n-1}}{\sqrt{5}
^{n-2 }}\\
& = \frac{\sqrt{5}((1+\phi)^{n}-\phi^{n})-(1+\phi)^{n-1}+\phi^{n-1}}{\sqrt{5}
^{n }}\\
& = \frac{(1+\phi)^{n-1}(\sqrt{5}(1+\phi) -1) - \phi^{n-1}(\sqrt{5}\phi - 1)}{\sqrt{5}
^{n }}.
\end{align*}
    It is easy to check that 
    \[
    \sqrt{5}(1+\phi) -1 = (1+\phi)^2\quad \text{and}
    \quad \sqrt{5}\phi - 1 = \phi^2.
    \]
 \noindent\textit{\textbf{Part  \ref{dfactratio}.} $\displaystyle \lim_{n \to \infty} \frac{d_{n+k}(\alpha)}{d_n(\alpha)} = \frac{(1+\sqrt{1-4\alpha^2})^k}{2^k}$.}\\[2mm]
 First, we show that $\displaystyle \lim_{n \to \infty} \frac{d_{n+1}(\alpha)}{d_n(\alpha)} = \frac12(1+\sqrt{1-4\alpha^2})$. Note that this result is true if and only if 
$$\lim_{n\to\infty}(2d_{n+1}-d_n)^2= d_n^2(1-4\alpha^2)$$ if and only if
$$\lim_{n\to \infty} d_{n+1}^2-d_{n+1}d_n + \alpha^2d_n^2 = 0.$$
However
\begin{align*}
d_{n+1}^2-d_{n+1}d_n + \alpha^2d_n^2 &= d_{n+2}d_n+\alpha^{2n+2}-d_{n+1}d_n + \alpha^2d_n^2 \\
&=\alpha^{2n+2} + d_n(d_{n+2}-d_{n+1})+\alpha^2d_n^2\\
&= \alpha^{2n+2} + d_n(d_{n+2}-d_{n+1})+d_n(d_{n+1}-d_{n+2})\\
&= \alpha^{2n+2}.
\end{align*}
Since $\alpha\in[0,1/2)$, $\alpha^{2n+2} \to 0$ as $n\to \infty.$

Next, note that 
\[
\displaystyle  \frac{d_{n+k}(\alpha)}{d_n(\alpha)} = \displaystyle  \frac{d_{n+1}(\alpha)}{d_n(\alpha)}\frac{d_{n+2}(\alpha)}{d_{n+1}(\alpha)}\cdots \frac{d_{n+k}(\alpha)}{d_{n+k-1}(\alpha)}, 
\]
so
\[
\lim_{n \to \infty}\frac{d_{n+k}(\alpha)}{d_n(\alpha)} = \prod_{i = 1}^k \left(\lim_{n \to \infty}\frac{d_{n+i}(\alpha)}{d_{n+i-1}(\alpha)}\right) = \frac{(1+\sqrt{1-4\alpha^2})^k}{2^k}.
\]

\noindent\textit{\textbf{Part  \ref{dfactratioalpha}.} $\displaystyle \lim_{n \to \infty} \frac{\alpha^n}{d_n(\alpha)} = 0$.} \\[2mm]
We show that $\displaystyle \frac{\alpha^n}{d_n} \leq \frac{2\alpha}{(n+1)}$ by showing that 
\[
d_n(\alpha)\geq  \frac{(n+1)}{2n}d_{n-1}(\alpha) \geq  \ldots > \frac{n+1}{2n}\frac{n}{2(n-1)}\cdots \frac{3}{2} d_{1}(\alpha) = \frac{n+1}{2^{n-1}}\geq  (n+1)\alpha^{n-1}>\frac{(n+1)}{2}\alpha^{n-1}. 
\]
To obtain the above we only need to show that $d_n(\alpha) \geq \frac{(n+1)}{2n} d_{n-1}$. Note that for $\alpha \in (0,.5)$, 
$$
d_1(\alpha)= 1,
$$ 
and
\[d_2(\alpha) = 1-\alpha^2 > \frac 34 = \frac{2+1}{2\cdot 2}d_1(\alpha).\]
Assume that $ d_{n-1}(\alpha)\geq \frac{n}{2(n-1)}\, d_{n-2}(\alpha)$. Then, 
 \begin{align*}
     d_{n}(\alpha) - \frac{n+1}{2n} d_{n-1}(\alpha) &= \frac{n-1}{2n} d_{n-1} - \alpha^2 d_{n-2}(\alpha)\\
     &\geq \frac{n-1}{2n}\cdot\frac{n}{2(n-1)}\, d_{n-2}(\alpha) - \alpha^2 d_{n-2}(\alpha) \\
     &=\left(\frac14 - \alpha^2\right)d_{n-2}(\alpha)\geq 0.
 \end{align*}

So we have that $d_n(\alpha) \geq \frac{(n+1)}{2n} d_{n-1}$.\\[3mm]

\noindent \textit{\textbf{Part~\ref{dfactBF}:} Let $\mathbb{FR}_n = \frac{\Phi^{n+1} - \phi^{n+1}}{\sqrt{5}(\Phi^n - \phi^n)}$. For $\alpha \in \left(0,\frac{1}{\sqrt{5}}\right)$ and $n\geq 1$, $d_n \geq \mathbb{FR}_n \cdot d_{n-1}(\alpha)$.}\\[2mm]

Since $\mathbb{FR}_1 = \frac{\Phi^{2} - \phi^{2}}{\sqrt{5}(\Phi - \phi)}  = \frac{\Phi + \phi}{\sqrt{5}}=1$,
$ d_1 = 1 \geq \mathbb{FR}_n\cdot  d_0  = 1 $.

Assume $d_{n-1} \geq \mathbb{FR}_{n-1} \cdot d_{n-2}$. Then, 
\begin{align*}
d_n - \mathbb{FR}_n \cdot d_{n-1} &= d_{n-1} - \alpha^2 d_{n-2}- \mathbb{FR}_n \cdot d_{n-1} \\
&= d_{n-1}(1-\mathbb{FR}_n) - \alpha^2 d_{n-2} \\
&\geq d_{n-2}\mathbb{FR}_{n-1}(1-\mathbb{FR}_n) - \alpha^2 d_{n-2} \\
&= d_{n-2}(\mathbb{FR}_{n-1}(1-\mathbb{FR}_n) - \alpha^2 )
\end{align*}
It is an easy exercise to verify that 
 $\mathbb{FR}_{n-1}(1-\mathbb{FR}_n) = \frac15$, and therefore 
 \[
 \mathbb{FR}_{n-1}(1-\mathbb{FR}_n) - \alpha^2>0 \text{ for }\alpha \in \left(0,\frac{1}{\sqrt{5}}\right).
 \]

  \end{proof}

\bibliographystyle{plain}
\bibliography{references}
\end{document}